\newdimen\bibspace
\renewenvironment{thebibliography}[1]{%
 \section*{\refname 
       \@mkboth{\MakeUppercase\refname}{\MakeUppercase\refname}}%
     \list{\@biblabel{\@arabic\c@enumiv}}%
          {\settowidth\labelwidth{\@biblabel{#1}}%
           \leftmargin\labelwidth
           \advance\leftmargin\labelsep
           \itemsep\bibspace
           \parsep\z@skip     %
           \@openbib@code
           \usecounter{enumiv}%
           \let\p@enumiv\@empty
           \renewcommand\theenumiv{\@arabic\c@enumiv}}%
     \sloppy\clubpenalty4000\widowpenalty4000%
     \sfcode`\.\@m}
    {\def\@noitemerr
      {\@latex@warning{Empty `thebibliography' environment}}%
     \endlist}
\newtheorem{thm}{Theorem}[section]
\newtheorem{lem}[thm]{Lemma}
\newtheorem{prop}[thm]{Proposition}
\newtheorem{cor}[thm]{Corollary}
\newtheorem{rem}[thm]{Remark}
\def\Xint#1{\mathchoice
  {\XXint\displaystyle\textstyle{#1}}%
  {\XXint\textstyle\scriptstyle{#1}}%
  {\XXint\scriptstyle\scriptscriptstyle{#1}}%
  {\XXint\scriptscriptstyle\scriptscriptstyle{#1}}%
  \!\int}
\def\XXint#1#2#3{{\setbox0=\hbox{$#1{#2#3}{\int}$}
  \vcenter{\hbox{$#2#3$}}\kern-.5\wd0}}
\def\dashint{\Xint-}
             \newcommand{\lda}{\lambda}
                \newcommand{\pa}{\partial}
\newcommand{\va}{\varepsilon}           \newcommand{\ud}{\mathrm{d}}
\newcommand{\be}{\begin{equation}}      \newcommand{\ee}{\end{equation}}
                 \newcommand{\X}{\overline{X}}
\newcommand{\Lda}{\Lambda}              \newcommand{\B}{\mathcal{B}}
\newcommand{\R}{\mathbb{R}}
\newcommand{\MH}{\mathcal H}
\newcommand{\al}{\alpha}
\newcommand{\om}{\Omega}
\newcommand{\calC}{{\mathcal C}}
\newcommand{\calO}{{\mathcal O}}
\DeclareMathOperator\capac{Cap}
\DeclareMathOperator\dist{dist}
\begin{document}

\title{\textbf{On local behavior of singular positive solutions to nonlocal elliptic equations}
\bigskip}

\author{\medskip   Tianling Jin\footnote{Supported in part by Hong Kong RGC grant ECS 26300716.}, \  \  Olivaine S. de Queiroz\footnote{Partially supported by CNPq-Brazil.},\ \ Yannick Sire, \ \ Jingang Xiong\footnote{Supported in part by NSFC 11501034, NSFC 11571019,  Beijing MCESEDD (20131002701) and the Fundamental Research Funds for the Central Universities.}}

\date{\today}

\maketitle

\begin{abstract}   We study local behavior of positive solutions to the fractional Yamabe equation with a  singular set of fractional capacity zero. 

\end{abstract}


\section{Introduction}

In the classical paper \cite{CGS}, Caffarelli-Gidas-Spruck studied the local behavior of positive solutions of
\be \label{eq:cl0}
-\Delta u= g(u) \geq 0
\ee
in the punctured unit ball $B_1\setminus \{0\}$ of $\R^n$, $n\ge 3$.
With some condition on the nonlinear function $g(t)$, they proved that every local solution $u$ is asymptotically radially symmetric, and showed that $u$ has a precise behavior near the isolated singularity $0$. Typical examples of $g$ are
$
g(t)=t^{p}, \ \ \frac{n}{n-2}\leq p\leq \frac{n+2}{n-2}.
$
Such equations are of Yamabe type equations with isolated singularities, and they have attracted a lot of attention. We refer the reader to \cite{CGS,HLT, KMPS,Lic,Li06,MP,Z} and references therein. In \cite{ChLin}, Chen-Lin studied a more general case that is the equation \eqref{eq:cl0} in $B_1\setminus\Lda$, where $\Lda$ is a singular set other than a single point. The importance of studying solutions of \eqref{eq:cl0} with a singular set was indicated in the work of Schoen \cite{Schoen} and Schoen-Yau \cite{SY} on complete locally conformally flat manifolds. 

In this paper, we are interested in the positive singular solutions of the fractional Yamabe equation
\be\label{eq:maineq}
(-\Delta)^\sigma u=u^{\frac{n+2\sigma}{n-2\sigma}} \quad \mbox{in }\om \setminus \Lda, \quad u>0\quad\mbox{in }\R^n,
\ee
where $n\ge2$, $\sigma\in (0,1)$, $\om$ is an open set in $\R^n$, $\Lda$ is a closed subset of measure zero, and $(-\Delta)^\sigma$ is the fractional Laplacian defined as
\begin{equation}\label{eq:fl}
(-\Delta )^\sigma u(x)= \mbox{P.V.} c_{n,\sigma} \int_{\R^n}  \frac{u(x)-u(y)}{|x-y|^{n+2\sigma}}\,\ud y
\end{equation}
with $c_{n,\sigma}=\frac{2^{2\sigma}\sigma\Gamma(\frac{n+2\sigma}{2})}{\pi^{\frac{n}{2}} \Gamma(1-\sigma)}$ and the gamma function $\Gamma$. Throughout this paper, we assume that 
$$
u\in C^2(\om \setminus \Lda) \ \ \textrm{and} \ \ \int_{\R^n}\frac{|u(x)|}{1+|x|^{n+2\sigma}}\,\ud x<\infty,
$$
which will make the formula \eqref{eq:fl} well-defined in $\Omega\setminus\Lambda$. Each solution $u$ of \eqref{eq:maineq} induces a conformal metric $g:=u^{\frac{4}{n-2\sigma}} |\ud x|^2$ of constant fractional $Q$-curvature \cite{CG} in $\om \setminus \Lda$.  In view of the singular Yamabe problem, one may ask that if $\Lda \subset \om$ is a $k$-dimensional smooth compact manifold, can we construct a complete conformal metric $g$ of constant fractional $Q$-curvature? Can we describe asymptotic behavior of the singular (not necessary complete) conformal metrics of constant fractional $Q$-curvature? Due to the nonlocality, they are hard to answer. Under some conditions, Gonz\'alez-Mazzeo-Sire \cite{GMS} showed that $\Gamma(\frac{n}{4} - \frac{k}{2} + \frac{\sigma}{2}) \Big/ \Gamma(\frac{n}{4} - \frac{k}{2} - \frac{\sigma}{2}) > 0$ is necessary to have a complete metric (see Theorem \ref{th:SY}). They also constructed complete metrics when $\sigma$ is very close to $1$ and established a blow up rate. When $\Lda$ is an isolated point,  Caffarelli-Jin-Sire-Xiong \cite{paper-ARMA} proved asymptotic radial symmetry of the singular solutions and their sharp blow up rate . The radial singular solutions have been studied by  DelaTorre-Gonz\'alez \cite{DG} and DelaTorre-del Pino-Gonzalez-Wei \cite{DD+}. In particular, in \cite{DD+} they constructed a class of Delaunay-type solutions. There are other work on the singular Yamabe problem, for example, Qing-Raske \cite{qing-raske} and Zhang \cite{zhang}. For the fractional Yamabe problem on compact manifolds, we would like to refer to the work Gonz\'alez-Qing \cite{gonzalez-qing}, Choi-Kim \cite{choi-kim}, Gonz\'alez-Wang \cite{gonzalez-wang} and Kim-Musso-Wei \cite{kim-musso-wei-1, kim-musso-wei}.



To analyze \eqref{eq:maineq}, we will use the fact that the fractional Laplacian $(-\Delta)^\sigma$ can also be realized as a Dirichlet-to-Neumann operator. This was discovered by Caffarelli-Silvestre \cite{CaS}.  In order to describe in a more precise way, let us first introduce some notations. We use capital letters, such as $X=(x,t)$, to denote points in $\R^{n+1}$, and $t\geq 0$ usually.
$\B_R(X)$ denotes the ball in $\R^{n+1}$ with radius $R$ and center $X$, $\B^+_R(X)$ as $\B_R(X)\cap \R^{n+1}_+$, and $B_R(x)$ as the ball in $\R^{n}$ with radius $R$ and center $x$. We also write $\B_R(0), \B^+_R(0), B_R(0)$ as $\B_R, \B_R^+, B_R$ for short. We use $\pa' \B_R^+(X), \pa'' \B_{R}^+(X)$ to denote the straight and curved boundary portion of $\pa \B_{R}^+(X)$, respectively. Through the extension formulation for $(-\Delta)^\sigma$ in \cite{CaS}, the equation \eqref{eq:maineq} is equivalent to a degenerate elliptic equation with a Neumann boundary condition in one dimension higher:
\be\label{eq:ex0-1}
\begin{cases}
\mathrm{div}(t^{1-2\sigma} \nabla_{X} U)=0 & \quad \mbox{in }\R^{n+1}_+,\\
\frac{\pa U}{\pa \nu^\sigma} = u^{\frac{n+2\sigma}{n-2\sigma}} &\quad \mbox{for }x\in \om\setminus\Lambda,
\end{cases}
\ee
where
$$
\frac{\pa U}{\pa \nu^\sigma}(x,0)= -\lim_{t\to 0^+} t^{1-2\sigma} \pa_t U(x,t),
$$
and $u(x)=U(x,0)$. 

For an open set $E\subset\R^{n+1}$, we define the weighted Sobolev space $W^{1,2}(|t|^{1-2\sigma}, E)$ as the space of weakly differentiable $L^1$ functions with bounded norm
\[
\|U\|_{W^{1,2}(|t|^{1-2\sigma},E)}:= \left(\int_{E} |t|^{1-2\sigma} U^2\,\ud X+\int_{E} |t|^{1-2\sigma} |\nabla_X U|^2\,\ud X \right)^{1/2}.
\]
The weight $|t|^{1-2\sigma}$ belongs to the $A_2$ class and the weighted Sobolev space is well understood; see Fabes-Jerison-Kenig \cite{fabes} and  the book Heinonen-Kilpel\"ainen-Martio \cite{HKM}. A solution of \eqref{eq:ex0-1} is understood as a function in $W^{1,2}(|t|^{1-2\sigma},K)$ for every compact set $K\subset\R^{n+1}_+\cup\{\Omega\setminus\Lda\}$ satisfying \eqref{eq:ex0-1} in the sense of distribution. Many regularity properties for such weak solutions of linear equation related to \eqref{eq:ex0-1} can be found in Cabre-Sire \cite{CS}, Jin-Li-Xiong \cite{JLX} and etc.

Our first theorem is a cylindrical  symmetry result when $\om$ is the whole space and $\Lda$ is a lower dimensional hyperplane. Namely,

\begin{thm}\label{symTemp}
Let $1\le k\le n-2\sigma$ and $U$ be a nonnegative solution of
\begin{equation}\label{temp}
\begin{cases}
\mathrm{div}(t^{1-2\sigma} \nabla_{X} U)=0 & \quad \mbox{in }\R^{n+1}_+,\\
\frac{\pa U}{\pa \nu^\sigma} = U(x,0)^{\frac{n+2\sigma}{n-2\sigma}} &\quad \mbox{on }\R^n\setminus\R^k.
\end{cases}
\end{equation}
Suppose there exists $x_0\in\R^{k}$ such that $\limsup_{\xi\to(x_0,0)}U(\xi)=\infty$. Then
$$
U(x',x'',t)=U(x',\tilde x'',t)
$$
where $x' \in \R^k$ and $x'',\tilde x''\in \R^{n-k}$ that $|x''|=|\tilde x''|$.
\end{thm}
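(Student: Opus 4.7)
The plan is to establish the required rotational symmetry in the $x''$-variables one hyperplane at a time, by combining a Kelvin transform with the method of moving planes. The conclusion $U(x',x'',t) = U(x', \tilde x'', t)$ whenever $|x''| = |\tilde x''|$ is equivalent to reflection symmetry of $U$ across every hyperplane of the form $T_e := \{x''\cdot e = 0\}$ with $e \in \mathbb{S}^{n-k-1}$, since such reflections generate all rotations of $\R^{n-k}$. After translating in $\R^k$ I take $x_0 = 0$. Fix a direction $e$ and pick a point $p_0 \in \R^k \setminus\{0\}$ in a neighborhood of which $U$ is bounded (such $p_0$ exist because $U$ is smooth on $\R^n\setminus\R^k$). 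Apply the Kelvin transform at $P_0 = (p_0, 0)$,
\[
\tilde U(X) \;=\; |X - P_0|^{-(n-2\sigma)}\, U\!\left(P_0 + \frac{X - P_0}{|X - P_0|^2}\right).
\]
By the conformal invariance of the degenerate extension problem under boundary-centered inversions, $\tilde U$ solves \eqref{temp} with the same singular set $\R^k$ (preserved since $p_0 \in \R^k$), decays like $|X|^{-(n-2\sigma)}$ at infinity, and blows up at the image $P^* := P_0 - P_0/|P_0|^2 \in \R^k$ of the original singularity. Crucially $T_e$ passes through $p_0$ and is thus preserved by the inversion, so any reflection symmetry of $\tilde U$ across $T_e$ descends to $U$.

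I then run the method of moving planes on $\tilde U$ along the $e$-direction. For $\lambda\in\R$, write $\Sigma_\lambda = \{(x,t)\in\R^{n+1}_+ : x''\cdot e > \lambda\}$, let $X^\lambda$ denote the reflection of $X$ across $\{x''\cdot e = \lambda\}$, and set $\tilde U_\lambda(X) = \tilde U(X^\lambda)$. Define
\[
\lambda^* := \inf\{\mu \in \R : \tilde U \le \tilde U_\lambda \text{ on } \Sigma_\lambda \text{ for every } \lambda \ge \mu\}.
\]
The decay $\tilde U(X) = O(|X|^{-(n-2\sigma)})$ gets the procedure started for all $\lambda$ large. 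Propagation in $\lambda$ rests on the weak and strong maximum principles for $\mathrm{div}(t^{1-2\sigma}\nabla\cdot)$ together with the convexity of $s^{(n+2\sigma)/(n-2\sigma)}$ on the boundary and a Hopf-type/narrow-strip argument, all as developed in the weighted setting by Jin--Li--Xiong \cite{JLX} and Caffarelli--Jin--Sire--Xiong \cite{paper-ARMA}. At $\lambda^*$, the strong maximum principle yields $\tilde U \equiv \tilde U_{\lambda^*}$ on $\Sigma_{\lambda^*}$. The key contradiction excluding $\lambda^* > 0$ is that this identity would force $\tilde U$ to blow up at the reflected point $(P^*)^{\lambda^*}$, whose $x''$-component equals $2\lambda^* e \ne 0$ and hence lies off the singular set $\R^k$; but $\tilde U$ is smooth on $\R^n\setminus\R^k$, a contradiction. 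Thus $\lambda^* \le 0$; running the same argument in direction $-e$ gives $\lambda^* \ge 0$, so $\lambda^* = 0$ and $\tilde U$ is symmetric across $T_e$.

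The principal obstacle is the rigorous execution of the moving plane step in the presence of the $k$-dimensional singular set $\R^k$: one must verify that the weighted maximum principle and Hopf lemma remain valid up to $\R^k$, make sense of the comparison $\tilde U \le \tilde U_\lambda$ across points of $\R^k$ where both sides can be infinite, and prove the start-up step with quantitative decay estimates on $\tilde U$. The hypothesis $1 \le k \le n - 2\sigma$ should enter through the fractional capacity of $\R^k$, which is precisely the regime in which $\R^k$ can be a nonremovable but still sufficiently thin singular set and in which the requisite weighted energy estimates hold. Once symmetry across every $T_e$ is established, composing such reflections generates all rotations of $\R^{n-k}$, yielding the claimed cylindrical symmetry.
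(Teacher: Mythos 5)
Your proposal has a genuine gap at its very first step, the choice of the Kelvin center. You write ``pick a point $p_0 \in \R^k \setminus\{0\}$ in a neighborhood of which $U$ is bounded (such $p_0$ exist because $U$ is smooth on $\R^n\setminus\R^k$).'' This does not follow: smoothness of $U$ away from $\R^k$ says nothing about boundedness of $U$ near points \emph{of} $\R^k$, and the hypothesis only guarantees a singularity at one point $x_0$, not its absence elsewhere on $\R^k$. Worse, the conclusion of the theorem itself (cylindrical symmetry plus $\limsup_{\xi\to(x_0,0)}U=\infty$) forces $U$ to blow up at \emph{every} point of $\R^k$, so the bounded point $p_0\in\R^k$ you need in fact never exists. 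Without it, $\tilde U$ need not decay like $|X|^{-(n-2\sigma)}$ at infinity and the moving-plane machinery never gets started; and centering the inversion off $\R^k$ to recover decay destroys the invariance of $\R^k$ and of the hyperplanes $T_e$, so the symmetry of $\tilde U$ would no longer descend to $U$.

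The paper sidesteps this by not using a single Kelvin transform plus moving planes, but a moving-sphere argument. It fixes $Y=(0_k,y,0)$ with $y\in\R^{n-k}\setminus\{0\}$ (so $Y$ is \emph{off} the singular set), considers the family of inversions $U_{Y,\lambda}$ with $0<\lambda<|y|$, and shows $U_{Y,\lambda}\le U$ outside $\B_\lambda(Y)$ for all such $\lambda$; the image of $\R^k$ under each such inversion is a compact $k$-dimensional sphere of finite $\MH^{n-2\sigma}$-measure, hence of zero $\sigma$-capacity by Theorem~\ref{thm:hausdroff1}, and Proposition~\ref{prop:liminf} then supplies the maximum principle needed both to start and to propagate the comparison. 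Letting $|y|\to\infty$ along a fixed direction with $\lambda=|y|-a$ yields the reflection inequalities, and hence rotational symmetry in the $x''$-variables. In short, the moving-sphere variant lets one compactify the singular set on the fly instead of requiring a nonexistent good center on $\R^k$; that is the key idea your proposal is missing.
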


The condition $k\le n-2\sigma$ will ensure that $\capac_\sigma(\Lambda)=0$ (see \eqref{def:capacity} and Theorem \ref{thm:hausdroff1}). If there is no singular point of $U$ in $\R^{n+1}_+\cup \pa \R^{n+1}_+$, Jin-Li-Xiong \cite{JLX} proved a Liouville theorem. If there is only one singular point on $\pa \R^{n+1}_+$, Caffarelli-Jin-Sire-Xiong \cite{paper-ARMA} prove that $U(x,0)$ is radially symmetric.  If $\om$ is not $\R^n$, one should not expect to have the cylindrical symmetry. However, we can show an asymptotic cylindrical symmetry. In fact, we can prove it when $\Lda$ is a smooth submanifold of $\R^n$.  To this end, we assume that $\Lambda\subset B_{1/2}$ is a smooth $k-$dimensional closed manifold with
$
k \leq n-2\sigma.
$
Let $N$ be a tubular neighborhood of $\Lambda$ such that any point of $N$ can be uniquely expressed as the sum $x+v$ where $x \in \Lambda$ and $v \in (T_x\Lambda)^\perp$, the orthogonal complement of the tangent space of $\Lambda $ at $x$. Denote $\Pi$ the orthogonal projection of $N$ onto $\Lambda$. For small $r>0$ and $z \in \Lambda$,
$$
\Pi_r^{-1} (z)= \left \{ y \in N, \,\,|\,\,\Pi(y)=z,\ |y-z|=r\right \}.
$$
We prove the following

\begin{thm}\label{thm:sym}
Suppose $U\geq 0$ in $\B_2^+$ is a solution of
\be\label{eq:ex0}
\begin{cases}
\mathrm{div}(t^{1-2\sigma} \nabla_{X} U)=0 & \quad \mbox{in }\B_2^+,\\
\frac{\pa U}{\pa \nu^\sigma} = U^{\frac{n+2\sigma}{n-2\sigma}} &\quad \mbox{for }x\in \pa'\B_2^+\setminus\Lambda,
\end{cases}
\ee
and $N$, $\Lambda$ and $\Pi$ are as above. Then we have, for $x,x' \in \Pi_r^{-1}(z)$,
\begin{equation}\label{sym}
U(x,0)=U(x',0)(1+O(r))\quad\mbox{as } r \to 0^+,
\end{equation}
where $O(r)$ is uniform for all $z \in \Lda$.
\end{thm}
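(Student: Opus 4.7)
I would argue by contradiction, blow up at scale $r$ to reduce to the setup of Theorem~\ref{symTemp}, and then sharpen the resulting qualitative symmetry to the $O(r)$ rate via a quantitative moving planes step.

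First, assume \eqref{sym} fails. Then there exist sequences $r_j\downarrow 0$, $z_j\in\Lambda$, and $x_j,x_j'\in\Pi_{r_j}^{-1}(z_j)$ with $|U(x_j,0)/U(x_j',0)-1|\geq \e_j r_j$ and $\e_j\to\infty$. By compactness of $\Lambda$ pass to a subsequence with $z_j\to z_\infty\in\Lambda$. If $U$ is bounded in a neighborhood of $z_\infty$, then $\capac_\sigma(\Lambda)=0$ (via $k\leq n-2\sigma$ and Theorem~\ref{thm:hausdroff1}) together with the removable singularity theorem extend $U$ to a smooth solution across $\Lambda$ near $z_\infty$; Lipschitz continuity and strict positivity of $U$ then yield $|U(x,0)/U(x',0)-1|=O(r)$, contradicting $\e_j\to\infty$. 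So I may assume $\limsup_{x\to z_\infty}U(x,0)=\infty$.

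Next, I translate $z_\infty$ to $0$ and rotate so $T_0\Lambda=\R^k\times\{0\}^{n-k}$, and introduce the conformally invariant rescaling
\[
V_j(X):=r_j^{(n-2\sigma)/2}\,U(z_j+r_j X),\qquad X\in\B_{1/r_j}^+,
\]
which satisfies \eqref{eq:ex0} with rescaled singular set $\Lambda_j:=r_j^{-1}(\Lambda-z_j)$. Smoothness of $\Lambda$ gives $\Lambda_j\to\R^k$ in $C^2_{\loc}$ with deviation $O(r_j)$ on bounded sets. Granting an \emph{a priori} upper bound $U(x,0)\leq C\,\dist(x,\Lambda)^{-(n-2\sigma)/2}$ (a CGS-type estimate obtainable by a standard doubling--rescaling argument), the $V_j$ are uniformly bounded on compact subsets of $(\R^{n+1}_+\cup\R^n)\setminus\R^k$, and weighted elliptic regularity from \cite{JLX} yields $C^2_{\loc}$-precompactness. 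Along a subsequence $V_j\to V_\infty$, a nontrivial nonnegative solution of \eqref{temp} on $\R^{n+1}_+$ that is singular at the origin; Theorem~\ref{symTemp} then applies to give cylindrical symmetry of $V_\infty$ around $\R^k$. Writing $X_j:=(x_j-z_j)/r_j$ and $X_j':=(x_j'-z_j)/r_j$---unit vectors in $(T_{z_j}\Lambda)^\perp$ with subsequential limits $X_\infty,X_\infty'\in\{0\}^k\times S^{n-k-1}$---the symmetry forces $V_\infty(X_\infty,0)=V_\infty(X_\infty',0)$, hence $V_j(X_j,0)/V_j(X_j',0)\to 1$.

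This blow-up delivers only the qualitative $o(1)$ symmetry, whereas the contradiction demands the sharper $O(r_j)$ rate. To close the gap I would apply a quantitative moving planes argument at the scale $r_j$: for each direction $e\in(T_{z_j}\Lambda)^\perp\cap S^{n-1}$, Alexandrov reflection of $U$ across hyperplanes perpendicular to $e$ should close up at a critical distance $O(r_j)$ from $z_j$. The exact symmetry of the flat-model limit $V_\infty$ guarantees critical distance $0$ there, while the $O(r_j)$ deviation of $\Lambda_j$ from $\R^k$ on unit scales and the domain truncation shift the critical position for $V_j$ by only $O(r_j)$. Reflection across the closing plane then yields $V_j(X_j,0)=V_j(X_j',0)+O(r_j)$, hence $|U(x_j,0)/U(x_j',0)-1|=O(r_j)$, contradicting $\e_j r_j$ with $\e_j\to\infty$. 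The principal obstacle will be exactly this quantitative step: the raw blow-up cannot distinguish $O(r)$ from slower rates, so the sharp rate must come from combining the exact symmetry of the flat model (Theorem~\ref{symTemp}) with the $C^2$-geometry of $\Lambda$ through the nonlocal moving planes machinery; uniformity of the $O(r)$ constant over $z\in\Lambda$ then follows from compactness of $\Lambda$ and uniform $C^2$-bounds on its geometry.
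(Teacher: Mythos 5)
Your proposal identifies the right raw ingredients (the gradient estimate from Theorem~\ref{thm:a}, the flat cylindrical model of Theorem~\ref{symTemp}, and a reflection/inversion argument), but the route you take has a genuine gap that you yourself flag: the blow-up at scale $r_j$ only hands you the qualitative $o(1)$ symmetry in the limit, and the ``quantitative moving planes'' step you invoke to upgrade $o(1)$ to $O(r_j)$ is not carried out. That step is precisely where the content of the theorem lives, and it is not easy to repair as stated. Moving \emph{planes} is particularly problematic here because the domain $\B_2^+$ is bounded and not reflection-invariant; one cannot simply reflect $U$ across an affine hyperplane near $\Lda$ and expect the reflected function to still satisfy a usable differential inequality up to the curved boundary $\pa''\B_2^+$. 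The nonlocal/degenerate structure is conformally invariant under Kelvin inversions centered on $\pa\R^{n+1}_+$ (the transformation \eqref{kelvin}), not under affine reflections, which is why the paper uses moving \emph{spheres} rather than moving planes.

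The paper's proof is in fact simpler and avoids the contradiction-plus-blow-up scaffolding entirely. First, using the upper bound $u(x)\le C\dist(x,\Lda)^{-(n-2\sigma)/2}$ from Theorem~\ref{thm:a}, Proposition~\ref{prop:liminf}, and the narrow domain technique, it is shown directly that the moving sphere stops at the distance to $\Lda$: there is an $\va>0$ so that
\[
u_{x,\lda}(y)\le u(y)\quad\text{for all }0<\lda<\dist(x,\Lda)\le\va/2,\ |y-x|\ge\lda.
\]
Then the $O(r)$ rate drops out of a single clever geometric application of this inequality. One picks $x_1,x_2\in\Pi_r^{-1}(z)$ realizing the max and min of $u$ on that fiber, sets $x_3=x_1+\frac{\va}{4}\frac{e_1-e_2}{|e_1-e_2|}$ with $e_i=x_i-z$, and chooses $\lda=\sqrt{\tfrac{\va}{4}(|e_1-e_2|+\tfrac{\va}{4})}$ so that the sphere inversion $y\mapsto x_3+\lda^2(y-x_3)/|y-x_3|^2$ sends $x_2$ to $x_1$. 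The inequality $u_{x_3,\lda}(x_2)\le u(x_2)$ then reads, after an elementary computation,
\[
\max_{\Pi_r^{-1}(z)}u\le\Big(\tfrac{8r}{\va}+1\Big)^{\frac{n-2\sigma}{2}}\min_{\Pi_r^{-1}(z)}u,
\]
which is exactly $u(x)=(1+O(r))u(x')$ with a uniform constant. So the sharp rate comes not from a quantitative reflection lemma, but from the fact that the moving sphere inequality is an explicit pointwise inequality whose conformal factor is already of size $1+O(r)$ once the inversion point $x_3$ is placed a \emph{fixed} distance away from $\Lda$. You should replace the blow-up-and-reflection skeleton by this direct use of the sphere inequality.
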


Since we do not use any special structure of the half ball, $\B_2^+$ can be replaced by general open sets containing $\B_{1/2}^+$.
When $\Lda$ is a point, the above theorem has been proved in Caffarelli-Jin-Sire-Xiong \cite{paper-ARMA}.

Finally, we provide an asymptotic blow up rate estimate for solutions with a singular  set of fractional capacity zero, which is not necessary to be a smooth manifold. Let us introduce the fractional capacity. For every compact subset $\Lda$  of $\R^n$ and $0<\sigma<1$, define
\begin{equation}\label{def:capacity}
\text{Cap}_\sigma(\Lda):=\inf \left \{ \int_{\mathbb R^n} |\xi|^{2\sigma} |\hat f(\xi)|^2\,\ud \xi:\,\,f \in C^\infty_c(\mathbb R^n),\,\,f(x)\geq 1  \text{ in }\Lda \right \}.
\end{equation}
This is a modification of the classical Newtonian capacity for our purpose. By the Caffarelli-Silvestre's extension formula, we will give an equivalent definition in Section \ref{sec:pre}. Two properties on the relation between this capacity $\text{Cap}_\sigma$ and the Hausdorff dimension are presented in Theorems \ref{thm:hausdroff1} and \ref{thm:hausdroff2}.

\begin{thm}\label{thm:a} Let $\Lda\subset B_{1/2}$ be compact and $\capac_\sigma(\Lambda)=0$. Let $U$ be a nonnegative solution of \eqref{eq:ex0}. Then there exists $C>0$ such that
\be \label{eq:clc}
u(x)\le C\dist (x,\Lda)^{-\frac{n-2\sigma}{2}}
\ee
for all $x\in B_2\setminus\Lda$.
\end{thm}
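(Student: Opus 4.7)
The proof proceeds by contradiction via a blow-up analysis. Suppose the estimate fails, so that there is a sequence $x_j \in B_2 \setminus \Lambda$ with $u(x_j)\dist(x_j,\Lambda)^{(n-2\sigma)/2} \to \infty$. The interior and boundary regularity estimates of \cite{JLX, CS} imply that $u$ is locally bounded on $B_2\setminus\Lambda$, so $\dist(x_j,\Lambda) \to 0$. I apply the Pol\'a\v{c}ik--Quittner--Souplet doubling lemma to the function $M(x) := u(x)^{2/(n-2\sigma)}$ on $B_{3/2}\setminus\Lambda$: this produces $y_j\in B_{3/2}\setminus\Lambda$ and $k_j\to\infty$ with
\[
M(y_j)\dist(y_j,\Lambda)\ge 2k_j \quad\text{and}\quad M(x)\le 2 M(y_j)\text{ for all } |x-y_j|\le k_j/M(y_j).
\]
Setting $\lambda_j := M(y_j)^{-1} = u(y_j)^{-2/(n-2\sigma)}$, this forces $\lambda_j/\dist(y_j,\Lambda)\to 0$.

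Rescale by $V_j(x,t) := \lambda_j^{(n-2\sigma)/2} U(y_j+\lambda_j x,\lambda_j t)$. Then $V_j$ solves the extension system \eqref{eq:ex0} on $\B_{k_j}^+$ with no rescaled singularity inside, $V_j(0,0)=1$, and $V_j(\cdot,0)\le 2^{(n-2\sigma)/2}$ on $B_{k_j}$. The H\"older estimates from \cite{JLX, CS} yield equicontinuity on compact subsets of $\overline{\R^{n+1}_+}$, so a subsequence converges in $C^0_\loc$ to a nonnegative $V_\infty$ satisfying
\[
\mathrm{div}(t^{1-2\sigma}\nabla V_\infty)=0\text{ in }\R^{n+1}_+,\quad \frac{\pa V_\infty}{\pa\nu^\sigma}=V_\infty(\cdot,0)^{(n+2\sigma)/(n-2\sigma)}\text{ on }\R^n,
\]
with $V_\infty(0,0)=1$. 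The Liouville theorem of \cite{JLX} identifies $v_\infty:=V_\infty(\cdot,0)$ as a standard Aubin--Talenti bubble. Extract a further subsequence with $y_j\to y^*$; since $u(y_j)\to\infty$ but $u$ is locally bounded off $\Lambda$, necessarily $y^*\in\Lambda$.

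The contradiction will come from a concentration/integrability mismatch. The bubble carries a definite amount of $L^{2n/(n-2\sigma)}$ mass, giving $\int_{B_R}v_\infty^{2n/(n-2\sigma)}\ge\rho>0$ for $R$ large, so unwinding the rescaling yields
\[
\int_{B_{R\lambda_j}(y_j)} u^{2n/(n-2\sigma)}\ge\rho/2 \quad\text{for all large }j.
\]
Since $R\lambda_j\to 0$ and $y_j\to y^*$, these balls eventually lie inside any fixed $B_\delta(y^*)$, so $\int_{B_\delta(y^*)}u^{2n/(n-2\sigma)}\ge\rho/2$ for every $\delta>0$. To close the argument it suffices to establish the local integrability $u\in L^{2n/(n-2\sigma)}_\loc(B_2)$, since then absolute continuity of the integral forces the left side to vanish as $\delta\to 0$. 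This is where the hypothesis $\capac_\sigma(\Lambda)=0$ enters: choose capacitary cutoffs $\eta_k\in C^\infty_c(\R^{n+1})$ vanishing near $\Lambda\times\{0\}$ with $\eta_k\to 1$ a.e.\ and $\int t^{1-2\sigma}|\nabla\eta_k|^2\,\ud X\to 0$; testing the extension equation against $U\eta_k^2$ and combining the resulting energy identity with the weighted Sobolev trace inequality via a Brezis--Kato absorption yields the required uniform bound on $\int(u\eta_k)^{2n/(n-2\sigma)}$.

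The main obstacle is executing this last absorption rigorously. In the energy identity the cross term is controlled by $B:=\int t^{1-2\sigma}U^2|\nabla\eta_k|^2$, which is small by $\capac_\sigma(\Lambda)=0$ provided $U$ is controlled on $\mathrm{supp}(\nabla\eta_k)$; the nonlinear term $C:=\int u^{p+1}\eta_k^2$ with $p=(n+2\sigma)/(n-2\sigma)$ must then be absorbed into $\|u\eta_k\|_{L^{p+1}}^2$ through H\"older and Sobolev, which succeeds only if the local $L^{p+1}$ mass of $u$ on the support of $\eta_k$ is sufficiently small. Making these two estimates simultaneously compatible requires a careful choice of $\eta_k$ (thin tubular neighborhoods of $\Lambda$ modulated by the capacitary potential of $\Lambda$) and is the technical heart of the argument.
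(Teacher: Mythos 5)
Your reduction to the Liouville theorem via a doubling-lemma blow-up is the first half of the paper's own proof. The paper also rescales so that the singularity escapes to infinity, obtains convergence to a global solution, and invokes the Liouville theorem of \cite{JLX}. But from that point onward the two arguments diverge, and your proposed route to a contradiction rests on a claim that is actually false.

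You say $\capac_\sigma(\Lambda)=0$ together with a capacitary cutoff and a Brezis--Kato absorption should give $u\in L^{2n/(n-2\sigma)}_{\rm loc}(B_2)$ \emph{across} $\Lambda$. This cannot work, because solutions saturating the theorem's own upper bound already violate it. If $u(x)\sim\dist(x,\Lambda)^{-(n-2\sigma)/2}$ — the rate you are trying to prove as an upper bound, and which is achieved by the Delaunay-type solutions of DelaTorre--del\,Pino--Gonz\'alez--Wei \cite{DD+} when $\Lambda$ is a point — then $u^{2n/(n-2\sigma)}\sim\dist(x,\Lambda)^{-n}$, and for a $k$-dimensional singular set one gets $\int_{\{\dist<\delta\}} u^{2n/(n-2\sigma)}\sim\int_0^\delta r^{-n}\,r^{n-k-1}\,\ud r=\infty$ for every $k\ge0$. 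Since $\capac_\sigma(\{0\})=0$ (Theorem \ref{thm:hausdroff1}), the hypothesis of your claimed integrability lemma is satisfied while its conclusion fails. The Brezis--Kato/Moser-iteration machinery needs the local $L^{p+1}$ (or $L^{n/2\sigma}$ of the potential) mass to be small on the supports of the cutoffs, and near a point or submanifold where the optimal singular rate occurs that mass is infinite, not small; no choice of tubular-neighborhood cutoffs rescues this. So the "absolute continuity of the integral" step has nothing to act on, and the contradiction you want never materializes.

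What the paper does instead is a moving-sphere argument on the rescaled functions $W_j$. Because the equation is conformally invariant under Kelvin transforms \eqref{kelvin} centered on $\partial\R^{n+1}_+$, one proves (via the maximum principle of Proposition \ref{prop:liminf}, which is exactly where $\capac_\sigma(\Lambda)=0$ is used, together with the narrow-domain technique) that for every center $X_0$ the sphere can be moved out to \emph{any} radius $\lambda_0$ once $j$ is large. Passing to the limit gives $w_{x_0,\lambda}\le w$ for all $x_0$ and all $\lambda>0$, and the calculus lemma of Li--Zhang \cite{LZhang} then forces $w\equiv\text{const}$, contradicting the bubble profile $w(0)=1$, $w(y)\sim|y|^{-(n-2\sigma)}$. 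That is the step you are missing: the contradiction comes from the asymptotic-symmetry constraint, not from an integrability constraint.
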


\begin{rem}\label{rem:touch boundary}
Note that we assumed that $\Lda\subset B_{1/2}$ in Theorem \ref{thm:sym} and Theorem \ref{thm:a}. Both of these two theorems also apply to  $\Lda\subset B_2$ being compact. This assumption is only used to guarantee that  $U$ is lower bounded away from zero near $\partial'' \B_2^+$. If one knows from other means that $U$ is lower bounded away from zero near $\partial'' \B_2^+$, which is indeed the case of $U$ obtained as the extension of the solution $u$ of \eqref{eq:maineq} via the Poisson integral \eqref{eq:extension integral}, then it does not matter whether $\Lda$ intersects the boundary $\partial B_2$ or not in either of these two theorems.
\end{rem}

Notice that the definition \eqref{eq:fl} makes sense when $u\in L^1(\mathbb R^n).$ we are considering compact sets with Hausdorff dimension less than $n$ (so that its Lebesgue measure is zero). Thus, singular solutions are well defined for the fractional Laplacian as long as $u\in L^1.$ In fact, one can deduce corresponding results for solutions of the nonlocal equation \eqref{eq:maineq} easily from Theorem \ref{symTemp}, \ref{thm:sym} and \ref{thm:a}. When $\sigma=1$, these are proved by Chen-Lin \cite{ChLin} by the moving plane method.  The proofs of our results are along the similar ways in Caffarelli-Jin-Sire-Xiong \cite{paper-ARMA} when $\Lda=\{0\}$, which in turn adapts ideas from Li \cite{Li06}. An important ingredient is that the equation \eqref{eq:ex0} is invariant under those Kelvin transformations with respect to the balls centered on $\pa\R^{n+1}_+$. More precisely,
for each $\bar x\in\R^n$ and $\lda>0$, we define, $\X=(\bar x,0)$, and
\be\label{kelvin}
U_{\X, \lda}(\xi):=\left(\frac{\lda}{|\xi-\X|}\right)^{n-2\sigma}U\left(\X+\frac{\lda^2(\xi-\X)}{|\xi-\X|^2}\right),
\ee
the Kelvin transformation of $U$ with respect to the ball $\B_{\lda}(\X)$. If $U$ is a solution of \eqref{eq:ex0}, then $U_{\bar X,\lda}$ is a solution of \eqref{eq:ex0} in the corresponding domain. Such conformal invariance allows us to use the moving sphere method introduced by Li-Zhu \cite{LZhu}. This observation has also been used in \cite{JLX} and \cite{paper-ARMA}. The main difficulty here is that $\Lda$ is a set of (fractional) capacity zero instead of a single point.

The organization of the paper is as follows. In the Section \ref{sec:pre}, we discuss the fractional capacity and a weighted capacity, and recall some basic properties of solutions of linear equations. From Section \ref{sec:3} to \ref{sec:5} we prove Theorem \ref{thm:a}, Theorem \ref{symTemp} and Theorem  \ref{thm:sym} in order. In the last section, we give an application of Theorem \ref{thm:a} and slightly improve a main result in Gonz\'alez-Mazzeo-Sire \cite{GMS}.

\

\noindent{\bf Acknowledgments:} the authors would like to thank the referee for his/her valuable suggestions.

\section{Preliminaries}\label{sec:pre}

Since we shall use \eqref{eq:ex0} to study \eqref{eq:maineq}, it will be convenience to give another equivalent definition of the fractional capacity $\capac_\sigma(\Lda)$ by viewing $\Lda$ as a set in $\R^{n+1}$. For a compact set $\Lda\subset\R^n$ and an open set $\Omega\subset\R^{n+1}$ satisfying $\Lda\subset\Omega$, we define
\begin{equation}\label{eq:capacity2}
\mu_\sigma(\Lda,\Omega):=\inf \left \{ \int_{\mathbb R^{n+1}} |t|^{1-2\sigma}|\nabla_X G|^2\,\ud X:\,\,G \in C^\infty_c (\Omega),\,\,G(x,0)\geq 1 \text{ in }\Lda \right \}.
\end{equation}
The functions $G \in C^\infty_c (\Omega)$ satisfying that $\,G(x,0)\geq 1 \text{ in }\Lda$ will be called admissible test functions for evaluating $\mu_\sigma(\Lda,\Omega)$. When $\Omega=\R^{n+1}$, we write $\mu_\sigma(\Lda)=\mu_\sigma(\Lda,\R^{n+1})$ for short. Notice that the weight $|t|^{1-2\sigma}$ is an $A_2$ function, and $\mu(\Lda)$ is a weighted capacity, whose general theory can be found in Fabes-Jerison-Kenig \cite{fabes} and the  book Heinonen-Kilpel\"ainen-Martio \cite{HKM}.

We are going to show that
\begin{prop}\label{prop:equivalence}
For every compact set $\Lda\subset\R^n$, there holds
\[
\mu_\sigma(\Lda)=2N(\sigma)\capac_\sigma(\Lda),
\]
where $N(\sigma)=2^{1-2\sigma}\Gamma(1-\sigma)/\Gamma(\sigma)$.
\end{prop}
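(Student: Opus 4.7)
The plan is to establish $\mu_\sigma(\Lambda) \le 2N(\sigma)\capac_\sigma(\Lambda)$ and the reverse inequality separately, using the Caffarelli--Silvestre extension as the bridge between the two energies. The key identity from \cite{CaS}, which is precisely what matches the constants on the two sides, is that for $f \in C_c^\infty(\R^n)$ with Poisson extension $U$ to $\R^{n+1}_+$ (solving $\mathrm{div}(t^{1-2\sigma}\nabla U)=0$ in $\R^{n+1}_+$ with trace $f$), one has
\[
\int_{\R^{n+1}_+} t^{1-2\sigma}|\nabla U|^2 \,\ud X = N(\sigma)\int_{\R^n} |\xi|^{2\sigma}|\hat f(\xi)|^2 \,\ud \xi.
\]
Moreover, this Poisson extension minimizes weighted Dirichlet energy among functions in $W^{1,2}(t^{1-2\sigma},\R^{n+1}_+)$ with trace $f$, by the standard ``cross-term vanishing'' argument after integration by parts against the weighted-harmonic equation.

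For the upper bound, I would take $f$ admissible for $\capac_\sigma(\Lambda)$, extend it evenly across $\{t=0\}$ via its Poisson extension $U$ to all of $\R^{n+1}$ (so $U$ is weighted-harmonic on each half-space), and localize by a radial cutoff $\eta_R \in C_c^\infty(\B_{2R})$ equal to one on $\B_R$. Since $f$ has compact support, algebraic decay of the Poisson kernel forces both $U$ and $\nabla U$ to decay at infinity, so the energy contribution from $\{R<|X|<2R\}$ tends to $0$ as $R \to \infty$. A short mollification in the $t$-variable, combined with the $(1+\varepsilon)$-buffer trick (replace $f$ by $(1+\varepsilon)f$ so that mollified approximations still satisfy the constraint $\ge 1$ on $\Lambda$, then send $\varepsilon \to 0^+$), produces an admissible test function for $\mu_\sigma(\Lambda)$ whose weighted energy approaches $2N(\sigma)\int |\xi|^{2\sigma}|\hat f(\xi)|^2\,\ud\xi$. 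Taking the infimum over $f$ yields $\mu_\sigma(\Lambda) \le 2N(\sigma)\capac_\sigma(\Lambda)$.

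For the lower bound, given $G \in C_c^\infty(\R^{n+1})$ admissible for $\mu_\sigma(\Lambda)$, its trace $f:=G(\cdot,0) \in C_c^\infty(\R^n)$ is admissible for $\capac_\sigma(\Lambda)$. Applying the Dirichlet principle to each half-space, $G|_{\R^{n+1}_\pm}$ has weighted energy bounded below by that of the unique weighted-harmonic extension of $f$, namely the (even reflection of the) Poisson extension $U$. Summing the contributions from the two half-spaces and invoking the key identity gives
\[
\int_{\R^{n+1}} |t|^{1-2\sigma}|\nabla G|^2 \,\ud X \ge 2\int_{\R^{n+1}_+} t^{1-2\sigma}|\nabla U|^2 \,\ud X = 2N(\sigma)\int_{\R^n} |\xi|^{2\sigma}|\hat f(\xi)|^2 \,\ud \xi \ge 2N(\sigma)\capac_\sigma(\Lambda),
\]
and taking the infimum over $G$ completes the matching inequality. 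The main technical obstacle is in the upper-bound step: verifying rigorously that the evenly-reflected Poisson extension of a compactly supported test function can be approximated in $C_c^\infty(\R^{n+1})$ with arbitrarily small weighted-energy loss while preserving the pointwise constraint $G(\cdot,0)\ge 1$ on $\Lambda$; this is the place where the buffer-and-mollify dance must be performed carefully.
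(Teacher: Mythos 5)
Your proposal follows essentially the same route as the paper's own proof: both directions hinge on the Caffarelli--Silvestre energy identity with the constant $N(\sigma)$, the lower bound on $\mu_\sigma$ uses the Dirichlet principle on each half-space (the paper cites Lemma A.4 of \cite{JLX} for exactly this energy-minimizing property of the Poisson extension, combined with evenness of $F$ in $t$), and the upper bound is obtained by the same cutoff-at-scale-$R$, mollify, and $(1+\varepsilon)$-buffer construction you describe. The only cosmetic difference is that you speak of mollifying ``in the $t$-variable'' whereas the paper mollifies in all of $\R^{n+1}$ (which is what is actually needed to land in $C_c^\infty(\R^{n+1})$), but the underlying idea is identical.
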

\begin{proof}
For $f\in C^\infty_c (\R^n)$, let
\begin{equation}\label{eq:extension integral}
F(x,t)= \int_{\R^n} \mathcal{P}_\sigma(x-y,t) f(y)\,\ud y,
\end{equation}
where \[
 \mathcal{P}_\sigma(x,t)=\beta(n,\sigma)\frac{|t|^{2\sigma}}{(|x|^2+t^2)^{\frac{n+2\sigma}{2}}}
\] with constant $\beta(n,\sigma)$ such that $\int_{\R^n}\mathcal{P}_\sigma(x,1)\,\ud x=1$.
By   \cite{CaS}, we have
\[
\int_{\R^{n+1}_+}  t^{1-2\sigma}|\nabla_X F|^2\,\ud X =N(\sigma)  \int_{\mathbb R^n} |\xi|^{2\sigma} |\hat f(\xi)|^2\,\ud \xi.
\]

On one hand, for any $G(x,t)\in C^\infty_c(\R^{n+1}_+\cup \pa \R^{n+1}_+)$ with $G(x,0)=f(x)$, by Lemma A.4 of \cite{JLX} and the evenness of $F$ in $t$, we have
\[
\int_{\R^{n+1}_+}  |t|^{1-2\sigma}|\nabla_X F|^2 \,\ud X \le \int_{\R^{n+1}_+}  |t|^{1-2\sigma}|\nabla_X G|^2\,\ud X
\]
and
\[
\int_{\R^{n+1}_+}  t^{1-2\sigma}|\nabla_X F|^2 \,\ud X=\int_{\R^{n+1}_-}  |t|^{1-2\sigma}|\nabla_X F|^2 \,\ud X \le \int_{\R^{n+1}_-}  |t|^{1-2\sigma}|\nabla_X G|^2\,\ud X.
\]
Thus,
\[
2N(\sigma)  \int_{\mathbb R^n} |\xi|^{2\sigma} |\hat f(\xi)|^2\,\ud \xi\le \int_{\R^{n+1}}  |t|^{1-2\sigma}|\nabla_X G|^2\,\ud X,
\]
from which it follows that
\[
2N(\sigma)\capac_\sigma(\Lda)\le \mu_\sigma(\Lda).
\]

On the other hand, for every $\va>0$, there exists $f \in C^\infty_c(\mathbb R^n)$, $f(x)\geq 1$ on $\Lda$ such that
\[
\text{Cap}_\sigma(\Lda)+\va\ge \int_{\mathbb R^n} |\xi|^{2\sigma} |\hat f(\xi)|^2\,\ud \xi.
\]
Let $F$ be the one defined by $f$ through \eqref{eq:extension integral}. Let $\varphi$ be a radial smooth cut-off function supported in $\B_2$ and equal to $1$ in $\B_1$, and let $\varphi_r(X)=\varphi(X/r)$. It is elementary to check that
\[
 \int_{\mathbb R^{n+1}} |t|^{1-2\sigma}|\nabla_X (\varphi_rF)|^2\to  \int_{\mathbb R^{n+1}} |t|^{1-2\sigma}|\nabla_X F|^2\quad\mbox{as }r\to\infty.
\]
We choose $R$ large enough such that $\Lda\subset \B_{R/2}$ and
\[
 \int_{\mathbb R^{n+1}} |t|^{1-2\sigma}|\nabla_X (\varphi_RF)|^2\le\int_{\mathbb R^{n+1}} |t|^{1-2\sigma}|\nabla_X F|^2+\va
\]Let $\eta$ be a standard mollifier in $\R^{n+1}$ and $\eta_\delta(X)=\delta^{-n-1}\eta(X/\delta)$. Let $G_{\delta}=\eta_{\delta}*((1+\va)\varphi_RF)$. Then $G_\delta\in C^\infty_c (\R^{n+1})$. Since $G_\delta\to (1+\va)\varphi_RF$ uniformly in compact sets, and $(1+\va)\varphi_RF\ge 1+\va$ on $\Lda$, we have that
$G_\delta\ge 1$ on $\Lda$ for all sufficiently small $\delta$. Hence,
\[
\begin{split}
 \mu_\sigma(\Lda)&\le\int_{\mathbb R^{n+1}} |t|^{1-2\sigma}|\nabla_X G_\delta|^2\\
 &=\int_{\mathbb R^{n+1}} |t|^{1-2\sigma}|\eta_\delta*(\nabla_X ((1+\va)\varphi_RF))|^2\\
 &\to (1+\va)^2\int_{\mathbb R^{n+1}} |t|^{1-2\sigma}|\nabla_X (\varphi_RF)|^2\quad\mbox{as }\delta\to 0,
\end{split}
\]
where in the last limit we used the fact that $|t|^{1-2\sigma}$ is an $A_2$ weight.
Thus, we have
\[
\begin{split}
 \mu_\sigma(\Lda)&\le (1+\va)^2\int_{\mathbb R^{n+1}} |t|^{1-2\sigma}|\nabla_X (\varphi_RF)|^2\\
 &\le  (1+\va)^2(\int_{\mathbb R^{n+1}} |t|^{1-2\sigma}|\nabla_X F|^2+\va)\\
 &=(1+\va)^2(2N(\sigma)\int_{\mathbb R^n} |\xi|^{2\sigma} |\hat f(\xi)|^2\,\ud \xi+\va)\\
 &\le  (1+\va)^2(2N(\sigma)(\text{Cap}_\sigma(\Lda)+\va)+\va).
\end{split}
\]
Since $\va$ is arbitrary, we have
\[
 \mu_\sigma(\Lda)\le 2N(\sigma)\text{Cap}_\sigma(\Lda).
\]

This finishes the proof of this proposition.
\end{proof}

For $\Omega\subset\R^{n+1}$, let
\[
W^{1,2}(|t|^{1-2\sigma},\Omega):=\{w: w\in L^2(|t|^{1-2\sigma},\Omega), \nabla_X w\in L^2(|t|^{1-2\sigma},\Omega)\}
\]
and
\begin{equation}\label{eq:Wc}
W_c=C^0_c(\R^{n+1})\cap W^{1,2}(|t|^{1-2\sigma},\R^{n+1}).
\end{equation}
Then for every $w\in W_c$ that $w(x,0)\ge 1$ on $\Lda$, we have that
\[
\mu_\sigma(\Lda)\le \int_{\mathbb R^{n+1}} |t|^{1-2\sigma}|\nabla_X w|^2.
\]
This can be proved by the similar proof of Proposition \ref{prop:equivalence}, which is as follows. Let $\va,\delta>0$ and $\eta_\delta$ be the mollifier. Then as before, one has $(1+\va)\eta_{\delta}*w\in C^\infty_c(\R^{n+1})$ and $(1+\va)\eta_{\delta}*w\ge 1$ on $\Lda$ for all sufficiently small $\delta$. Then
\[
\begin{split}
 \mu_\sigma(\Lda)&\le\int_{\mathbb R^{n+1}} |t|^{1-2\sigma}|\nabla_X ((1+\va)\eta_{\delta}*w)|^2\\
 &= (1+\va)^2\int_{\mathbb R^{n+1}} |t|^{1-2\sigma}|\eta_\delta*(\nabla_X w)|^2\\
 &\to (1+\va)^2\int_{\mathbb R^{n+1}} |t|^{1-2\sigma}|\nabla_X w|^2\quad\mbox{as }\delta\to 0.
\end{split}
\]
By sending $\va\to 0$, we finish the proof.

This means that the admissible test functions for evaluating $\mu_\sigma(\Lda)$ in \eqref{eq:capacity2} can be chosen from a larger set $W_c$:
\[
\mu_\sigma(\Lda)=\inf \left \{ \int_{\mathbb R^{n+1}} |t|^{1-2\sigma}|\nabla_X G|^2:\,\,G \in W_c,\ G\ge 1\text{ on }\Lda \right \}.
\]
One further observes that
\[
\mu_\sigma(\Lda)=\inf \left \{ \int_{\mathbb R^{n+1}} |t|^{1-2\sigma}|\nabla_X G|^2:\,\,G \in W_c,\ 0\le G\le 1\mbox{ in }\R^{n+1}, \ G= 1\text{ on }\Lda \right \}.
\]



We have the following two properties on the connection between $\text{Cap}_\sigma(\Lda)$ and the Hausdorff measure of  $\Lda\subset\R^n$. The definition of Hausdorff measure can be found in Evans-Gariepy \cite{EG}.

\begin{thm}\label{thm:hausdroff1}
Let $\Lda\subset\R^n$ be compact. If $\MH^{n-2\sigma}(\Lda)<\infty$, then $\capac_\sigma(\Lda)=0$.
\end{thm}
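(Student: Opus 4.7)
The plan is to exploit the dual characterization of fractional capacity via Riesz energy, and then to derive a contradiction from $\MH^{n-2\sigma}(\Lda)<\infty$ using the density theorem for Hausdorff measures.

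By Proposition \ref{prop:equivalence}, it suffices to establish $\mu_\sigma(\Lda)=0$. Via Plancherel, $\capac_\sigma(\Lda)$ is comparable to the $\dot H^\sigma(\RR^n)$-capacity of $\Lda$, and the standard Cartan-type duality for fractional Sobolev capacities (see, e.g., Adams--Hedberg, \emph{Function Spaces and Potential Theory}) gives
$$
\capac_\sigma(\Lda) > 0 \iff \text{there exists a nontrivial Radon measure } \mu \geq 0 \text{ on } \Lda \text{ with } I_s(\mu)<\infty,
$$
where $s=n-2\sigma$ and $I_s(\mu)=\iint|x-y|^{-s}\,d\mu(x)\,d\mu(y)$ is the $s$-Riesz energy. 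I would argue by contradiction, assuming such a $\mu$ exists.

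Since $\MH^s(\Lda)<\infty$, the Besicovitch density theorem for Hausdorff measures (e.g., Mattila, \emph{Geometry of Sets and Measures in Euclidean Spaces}, Theorem 6.9(a)) gives that for $\mu$-almost every $x\in\Lda$ the upper density $\Theta^{*s}(\mu,x)=\limsup_{r\to 0^+}\mu(B_r(x))/r^s$ is strictly positive. For each such $x$ I would extract a log-separated sequence $r_k\downarrow 0$ with $r_{k+1}<r_k/2$ and $\mu(B_{r_k}(x))\geq c\,r_k^s$; since $r\mapsto\mu(B_r(x))$ is nondecreasing, this gives $\mu(B_r(x))\geq c\,2^{-s} r^s$ on the disjoint intervals $[r_k,2r_k]$. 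A layer-cake computation $\int|x-y|^{-s}\,d\mu(y)=s\int_0^\infty r^{-s-1}\mu(B_r(x))\,dr$ then diverges, so this potential equals $+\infty$ at $\mu$-almost every $x$. Integrating against $\mu$ yields $I_s(\mu)=+\infty$, contradicting the finite-energy hypothesis.

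The main technical point is the lower bound $\Theta^{*s}(\mu,x)>0$ for $\mu$-almost every $x$, which is precisely where the hypothesis $\MH^s(\Lda)<\infty$ enters (a naive subadditivity argument over a cover of $\Lda$ only produces the bound $\mu_\sigma(\Lda)\lesssim \MH^s(\Lda)$, not zero); everything else is a routine layer-cake calculation. An alternative route, bypassing the density machinery entirely, would be to invoke directly the classical theorem relating finite critical Hausdorff measure to vanishing Bessel capacity (Adams--Hedberg, Theorem 5.1.13), specialized to $p=2$ and $\alpha=\sigma$.
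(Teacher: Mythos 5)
Your argument is correct, but it is a genuinely different route from the paper's. The paper follows Evans--Gariepy (Section~4.7, Theorem~3) and works directly in the extension variable: given a neighborhood $V$ of $\Lda$, it covers $\Lda$ by balls $B_{r_i}(x_i)$ with $\sum r_i^{n-2\sigma}\lesssim\MH^{n-2\sigma}(\Lda)+1$, builds an explicit cone-type test function $f=\max_i f_i$ over the half-balls $\B_{r_i}(x_i,0)$ with $\int|t|^{1-2\sigma}|\nabla_X f|^2\le C$ (uniform in $V$), then iterates to get nested neighborhoods $V_k$ with functions $f_k$ whose gradient supports are pairwise disjoint, and finally considers $g_j=S_j^{-1}\sum_{k\le j} f_k/k$; the disjointness makes the energy $\sim S_j^{-2}\sum 1/k^2\to 0$. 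It is elementary, self-contained, and already adapted to the weighted extension energy $\mu_\sigma$ via Proposition~\ref{prop:equivalence}. You instead stay on $\R^n$, invoke the Cartan/Frostman duality identifying positivity of the $\dot H^\sigma$-capacity with existence of a nontrivial measure $\mu$ supported on $\Lda$ of finite $(n-2\sigma)$-Riesz energy, and then rule that out via the Besicovitch density theorem (the observation that $\MH^s(\Lda)<\infty$ forces $\Theta^{*s}(\mu,x)>0$ for $\mu$-a.e.\ $x$, via the comparison $\mu(\{\Theta^{*s}\le\delta\})\le 2^s\delta\,\MH^s(\Lda)$) plus the layer-cake divergence of the Riesz potential at points of positive upper density. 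What your approach buys is conceptual clarity and brevity once the duality and density theorems are granted; what the paper's approach buys is self-containment (it proves the statement from the definition of $\mu_\sigma$ with no geometric measure theory beyond the definition of Hausdorff measure and a standard covering), and it produces the quantitative intermediate bound $\capac_\sigma(\Lda)\lesssim\MH^{n-2\sigma}(\Lda)+1$ along the way, which is the point you correctly flag as insufficient by itself -- the telescoping step is what improves it to zero. Both proofs are sound; yours substitutes two substantial black boxes for the paper's explicit construction.
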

\begin{proof}
We follow the proof of Theorem 3 in Section 4.7 of \cite{EG}. 

Claim: There exists a constant $C$ depending only on $n,\sigma,\Lda$ such that if $V\subset\R^{n+1}$ is any open set containing $\Lda$, there exists an open set $\Omega\subset\R^{n+1}$ and  $f\in W_c$ (defined in \eqref{eq:Wc}) such that
\[
\begin{cases}
\Lda \subset \Omega\cap\R^n, \Omega\subset \{f=1\},\\
\text{supp}(f)\subset V,\\
\displaystyle \int_{\R^{n+1}}|t|^{1-2\sigma}|\nabla_X f|^2\,\ud X\le C.
\end{cases}
\]
This claim can be proved as follows. Let $\delta=\frac 12 \text{dist}(\Lda, \R^n\setminus V)$. Since $\MH^{n-2\sigma}(\Lda)<\infty$ and $\Lda$ is compact in $\R^n$, there exists a finite collection $\{B_{r_i}(x_i)\}$ of open balls such that $2r_i<\delta, B_{r_i}(x_i)\cap \Lda\neq \emptyset, \Lda\subset\cup_{i=1}^m B_{r_i}(x_i)$ and
\[
\sum_{i=1}^mr_{i}^{n-2\sigma}\le C\MH^{n-2\sigma}(\Lda)+1
\]
for some constant $C$. Now set $\Omega=\cup_{i=1}^m \B_{r_i}(X_i)$ with $X_i=(x_i,0)$, and define $f_i$ by
\[
f_i(X)=\begin{cases}
1\quad\mbox{if }|X-(x_i,0)|\le r_i,\\
2-\frac{|X-(x_i,0)|}{r_i}\quad\mbox{if }r_i\le |X-(x_i,0)|\le 2r_i,\\
0\quad\mbox{if }|X-(x_i,0)|\ge 2r_i.
\end{cases}
\]
Then
\[
\int_{\R^{n+1}}|t|^{1-2\sigma}|\nabla_X f_i|^2\,\ud X=r_i^{n-2\sigma}\int_{\mathcal B_2(x_i,0)\setminus\mathcal B_1(x_i,0) } |t|^{1-2\sigma}\,\ud X\le Cr_i^{n-2\sigma}.
\]
Let $f=\max_{1\le i\le m} f_i$. Then $\Omega\subset \{f=1\}, \mbox{suppt}(f)\subset V$ and
\[
\int_{\R^{n+1}} |t|^{1-2\sigma}|\nabla_X f|^2\,\ud X \le \sum_{i=1}^m \int_{\R^{n+1}} |t|^{1-2\sigma}|\nabla_X f_i|^2 \,\ud X \le C \sum_{i=1}^mr_i^{n-2\sigma}\le C(\MH^{n-2\sigma}(\Lda)+1).
\]
This finishes the proof of this claim. Using the claim inductively, we can find open sets $\{V_k\}_{k=1}^\infty$ in $\R^{n+1}$ and functions $f_k\in W_c$ such that
\[
\begin{cases}
\Lda\subset V_{k+1}\cap\R^n, V_{k+1}\subset V_k,\\
\overline V_{k+1}\subset \{X\in\R^{n+1}: f_k(X)=1\},\\
\mbox{supp}(f_k)\subset V_k,\\
\displaystyle \int_{\R^{n+1}} |t|^{1-2\sigma}|\nabla_X f_k|^2\,\ud X \le C.
\end{cases}
\]
Set
\[
S_j=\sum_{k=1}^j\frac{1}{k}\quad\mbox{and}\quad g_j=\frac{1}{S_j}\sum_{k=1}^j\frac{f_k}{k}.
\]
Notice that $g_j\ge 1$ on $V_{j+1}\cap\R^{n}$, and each $g_j\in W_c$. Since $\mathrm{supp}(|\nabla_X f_k|)\subset (V_k\setminus\overline V_{k+1})$, we have
\[
\begin{split}
\text{Cap}_\sigma(\Lda)&\le \frac{1}{2N(\sigma)}\int_{\R^{n+1}} |t|^{1-2\sigma}|\nabla_X g_j|^2\,\ud X\\
&=\frac{1}{2N(\sigma)S_j^2}\sum_{k=1}^j\frac{1}{k^2}\int_{\R^{n+1}} t^{1-2\sigma}|\nabla_X f_k|^2\,\ud X\\
&\le \frac{C}{2N(\sigma)S_j^2}\sum_{k=1}^j\frac{1}{k^2}\to 0\quad\mbox{as }j\to\infty.
\end{split}
\]
\end{proof}

We will need a trace version of Poincar\'e inequalities:

\begin{lem}\label{lem:ponicare} Suppose that $f\in W^{1,2}(|t|^{1-2\sigma}, \B_{r})$.  Then
\[
\dashint_{B_r}|f-(f)_{r}|^2 \,\ud x\le C r^{2\sigma+1} \dashint_{ \B_r}|t|^{1-2\sigma}|\nabla_X f|^2\,\ud X,
\]
where $(f)_{r}=\dashint_{B_r} f(x,0)\,\ud x$, $B_r=\B_r\cap\R^n$, and $C>0$ depends only on $n,\sigma$.
\end{lem}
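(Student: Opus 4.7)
The plan is to reduce to the unit scale $r=1$ by scaling and then combine two standard facts for the Muckenhoupt $A_2$-weight $|t|^{1-2\sigma}$: a weighted Poincar\'e inequality on $\B_1$ and the trace embedding into $L^2(B_1)$.

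First I would verify scale invariance. Substituting $g(Y):=f(rY)$, and using $|\B_r|=r^{n+1}|\B_1|$, $|B_r|=r^n|B_1|$ together with the homogeneity $|t|^{1-2\sigma}=r^{1-2\sigma}|s|^{1-2\sigma}$ under $t=rs$, a direct computation shows that the left-hand side transforms into $\dashint_{B_1}|g(y,0)-(g)_1|^2\,dy$ while the right-hand side transforms into $\dashint_{\B_1}|s|^{1-2\sigma}|\nabla_Y g|^2\,dY$, with the exponent $2\sigma+1$ exactly absorbing the discrepancy between the two balls' dimensions and the weight. Hence it suffices to prove the $r=1$ case
$$
\dashint_{B_1}|f(x,0)-(f)_1|^2\,dx \le C\dashint_{\B_1}|t|^{1-2\sigma}|\nabla_X f|^2\,dX.
$$

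Next I would invoke the weighted Poincar\'e inequality on the ball, which for an $A_2$ weight is classical (Fabes-Jerison-Kenig \cite{fabes}; see also Heinonen-Kilpel\"ainen-Martio \cite{HKM}): writing $\bar f$ for the weighted mean of $f$ over $\B_1$,
$$
\int_{\B_1}|t|^{1-2\sigma}|f-\bar f|^2\,dX \le C\int_{\B_1}|t|^{1-2\sigma}|\nabla_X f|^2\,dX.
$$
In the same framework the trace operator $W^{1,2}(|t|^{1-2\sigma},\B_1)\to L^2(B_1)$ is continuous; applied to $f-\bar f$ this gives
$$
\int_{B_1}|f(x,0)-\bar f|^2\,dx \le C\int_{\B_1}|t|^{1-2\sigma}\bigl(|f-\bar f|^2+|\nabla_X f|^2\bigr)dX \le C\int_{\B_1}|t|^{1-2\sigma}|\nabla_X f|^2\,dX.
$$
Finally, since the slice mean $(f)_1$ is the best constant approximation of $f(\cdot,0)$ in $L^2(B_1)$, one has $\int_{B_1}|f(x,0)-(f)_1|^2\,dx\le\int_{B_1}|f(x,0)-\bar f|^2\,dx$, which closes the argument after dividing by $|B_1|$.

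The single nontrivial ingredient is the trace inequality in the degenerate weighted setting; it can be quoted from the $A_2$-weighted Sobolev literature, or, if a precise reference is lacking, proved directly by even reflection across $\{t=0\}$ to reduce to the Caffarelli-Silvestre extension setting and then a standard $H^\sigma(B_1)\hookrightarrow L^2(B_1)$ Sobolev embedding. Everything else is routine bookkeeping.
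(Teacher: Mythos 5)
Your proposal follows essentially the same route as the paper: scale to $r=1$, apply the weighted Poincar\'e inequality on $\B_1$ (Fabes--Kenig--Serapioni, which the paper cites as \cite{FKS}), pass to the trace on $B_1$, and finish with the observation that the slice mean $(f)_1$ minimizes the $L^2(B_1)$ distance to constants. The one step you leave as a black box is the trace inequality $W^{1,2}(|t|^{1-2\sigma},\B_1)\to L^2(B_1)$, and here your fallback sketch has a gap: even reflection across $\{t=0\}$ only removes the sign restriction on $t$, it does not remove the restriction to the bounded ball $\B_1$, so you cannot yet ``reduce to the Caffarelli--Silvestre extension setting'', which lives on all of $\R^{n+1}_+$. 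What is actually needed is a bounded extension operator $W^{1,2}(|t|^{1-2\sigma},\B_1)\to W^{1,2}(|t|^{1-2\sigma},\R^{n+1})$ for this $A_2$ weight; the paper supplies exactly this by invoking Chua's extension theorem \cite{Chua}, after which it applies the global trace embedding into $L^{2n/(n-2\sigma)}(\R^n)$ and H\"older's inequality on $B_1$. With that extension step inserted, your argument closes and coincides with the paper's.
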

\begin{proof} We are going to use $f$ to denote both, the function and its restriction to $\mathbb R^n.$ By scaling, we only prove the case $r=1$. Let 
$$
g:=f-\dashint_{\B_1} f(\xi)\,\ud \xi.
$$
By the Poincar\'e inequality (Theorem 1.5 in \cite{FKS}), we have
\[
\int_{\B_1}|t|^{1-2\sigma}|g|^2\le C\int_{\B_1}|t|^{1-2\sigma}|\nabla_X g|^2,
\]where $C>0$ depends only on $n$ and $\sigma$. It follows
\[
 \|g\|^2_{W^{1,2}(|t|^{1-2\sigma},\B_1)}\le C \int_{\B_1}|t|^{1-2\sigma}|\nabla_X g|^2.
\]
 By Theorem 1.1 in \cite{Chua}, we can extend $g$ to $\tilde g\in W^{1,2}(t^{1-2\sigma},\R^{n+1})$ such that
 \[
   \|\tilde g\|^2_{W^{1,2}(|t|^{1-2\sigma},\R^{n+1})}\le C\|g\|^2_{W^{1,2}(|t|^{1-2\sigma},\B_1)},
    \]
where $C>0$ depends only on $n$ and $\sigma$.
Then we have
\[
\begin{split}
 \int_{B_1} |g(x,0)|^2\,\ud x&\le C (\int_{B_1} |g(x,0)|^{\frac{2n}{n-2\sigma}}\,\ud x)^{\frac{n-2\sigma}{n}}\\
 &\le C (\int_{\R^n} |\tilde g(x,0)|^{\frac{2n}{n-2\sigma}}\,\ud x)^{\frac{n-2\sigma}{n}} \le C    \|\tilde g\|^2_{W^{1,2}(|t|^{1-2\sigma},\R^{n+1})},
 \end{split}
\]
where we used the standard trace embedding in the last inequality. Combining all the inequalities in the above, we obtain
 \[
 \int_{B_1}|f(x,0)-\dashint_{\B_1} f(\xi)\,\ud \xi|^2 dx\le C\int_{\B_1}|t|^{1-2\sigma}|\nabla_X f|^2.
 \]
Then the conclusion follows from the fact that
\[
 \int_{B_1}|f(x,0)-\dashint_{B_1} f(y,0)\,\ud y|^2 dx\le  \int_{B_1}|f(x,0)-\dashint_{\B_1} f(\xi)\,\ud \xi|^2 dx.
\]
\end{proof}

\begin{thm}\label{thm:hausdroff2}
Assume that $\Lda\subset\R^n$ is a compact set and $\capac_{\sigma}(\Lda)=0$, then $\MH^s(\Lda)=0$ for all $s>n-2\sigma$. Namely, the Hausdorff dimension of $\Lda$ is less than or equal to $n-2\sigma$.
\end{thm}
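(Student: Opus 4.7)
The plan is to argue by contradiction via a Frostman energy argument. Suppose $\MH^s(\Lda) > 0$ for some $s > n-2\sigma$. Frostman's lemma then produces a nonzero Radon measure $\mu$ supported on $\Lda$ with $\mu(B_r(x)) \le C r^s$ for every $x\in\R^n$ and $r>0$. Using the layer-cake identity
$$
\int_{\R^n} \frac{d\mu(y)}{|x-y|^{n-2\sigma}} = (n-2\sigma)\int_0^\infty \frac{\mu(B_r(x))}{r^{n-2\sigma+1}}\,dr,
$$
the Frostman bound makes the integrand dominated by $C r^{s-n+2\sigma-1}$ near $r=0$, which is integrable precisely because $s>n-2\sigma$; convergence at $r=\infty$ is immediate from the finite total mass and compact support of $\mu$. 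Integrating once more against $d\mu(x)$ shows that the $(n-2\sigma)$-Riesz energy
$$
I_{n-2\sigma}(\mu) := \iint \frac{d\mu(x)\,d\mu(y)}{|x-y|^{n-2\sigma}}
$$
is finite.

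Since the Fourier transform of $|x|^{-(n-2\sigma)}$ equals $c_1(n,\sigma)|\xi|^{-2\sigma}$ as tempered distributions, Plancherel (applied to mollifications of $\mu$, passed to the limit) gives
$$
I_{n-2\sigma}(\mu) = c_2(n,\sigma) \int_{\R^n}|\xi|^{-2\sigma}|\hat\mu(\xi)|^2\,d\xi < \infty.
$$
Now for any admissible test function $f\in C^\infty_c(\R^n)$ with $f\ge 1$ on $\Lda$, Parseval and Cauchy--Schwarz yield
$$
0 < \mu(\Lda) \le \int f\,d\mu = \int_{\R^n}\hat f(\xi)\,\overline{\hat\mu(\xi)}\,d\xi \le \left(\int|\xi|^{2\sigma}|\hat f|^2\,d\xi\right)^{\!1/2}\left(\int|\xi|^{-2\sigma}|\hat\mu|^2\,d\xi\right)^{\!1/2}.
$$
Taking the infimum over admissible $f$ produces
$$
\capac_\sigma(\Lda) \ge \frac{c_2(n,\sigma)\,\mu(\Lda)^2}{I_{n-2\sigma}(\mu)} > 0,
$$
contradicting the assumption $\capac_\sigma(\Lda) = 0$. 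In particular, $\MH^s(\Lda) = 0$ for every $s > n-2\sigma$.

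The main technical point is the identity converting the real-side Riesz energy into the Fourier-side $L^2$ integral for a general positive measure. I would establish it by convolving $\mu$ with a standard nonnegative radial mollifier $\eta_\e$ to obtain smooth compactly supported densities $\mu_\e = \mu * \eta_\e$, applying the classical Parseval identity in $L^2$ to $\mu_\e$, and letting $\e\to 0$. Using $\hat{\mu_\e} = \hat\mu\cdot\hat{\eta_\e}$ with $\hat{\eta_\e}\to 1$ pointwise (and $|\hat{\eta_\e}|\le 1$ if one chooses $\eta$ to be, say, a Gaussian), monotone/dominated convergence applies on the Fourier side, while convergence on the real side follows from Fubini and the already-established finiteness of $I_{n-2\sigma}(\mu)$. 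This is the one place where care is needed; the rest of the argument is a clean duality between the Dirichlet seminorm defining $\capac_\sigma$ and the Riesz energy of the Frostman measure.
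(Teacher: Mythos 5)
Your argument is correct, but it takes a genuinely different route from the paper. You work entirely on the Fourier side of the definition \eqref{def:capacity}: via Frostman's lemma you obtain a measure $\mu$ on $\Lda$ with growth $\mu(B_r)\le Cr^s$, deduce finiteness of the $(n-2\sigma)$-Riesz energy, translate this by the identity $\widehat{|x|^{-(n-2\sigma)}}=c\,|\xi|^{-2\sigma}$ into $\int|\xi|^{-2\sigma}|\hat\mu|^2<\infty$, and then use the Parseval--Cauchy--Schwarz duality $\mu(\Lda)^2\le \bigl(\int|\xi|^{2\sigma}|\hat f|^2\bigr)\bigl(\int|\xi|^{-2\sigma}|\hat\mu|^2\bigr)$ to bound $\capac_\sigma(\Lda)$ from below. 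This is the classical potential-theoretic equivalence between Riesz capacity and Riesz energy, and it is clean and self-contained; the one technical point (the energy identity for nonsmooth $\mu$) is handled by mollification exactly as you say. The paper, by contrast, never leaves the extension picture: it works with the equivalent weighted capacity $\mu_\sigma$ in $\R^{n+1}$, sums admissible test functions for a sequence of competitors with vanishing weighted Dirichlet energy to build a single $g$ with $\int|t|^{1-2\sigma}|\nabla_X g|^2<\infty$ that blows up on $\Lda$, and then shows via the trace Poincar\'e inequality (Lemma~\ref{lem:ponicare}) plus a Vitali covering argument that the set of points where the rescaled local energy $r^{-s}\int_{\B_r}|t|^{1-2\sigma}|\nabla_X g|^2$ has positive $\limsup$ is $\MH^s$-null, directly following the Evans--Gariepy template. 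Your approach is shorter and avoids the Poincar\'e/covering machinery but relies on Fourier-side facts about Riesz kernels and Frostman's lemma; the paper's approach is more elementary in the sense of using only the weighted Sobolev theory already set up for the extension problem, which matches the toolkit used elsewhere in the paper.
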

\begin{proof}
We only need to prove it for $s$ close to $n-2\sigma$. We follow the proof of Theorem 4 in Section 4.7 of \cite{EG}. Suppose $\text{Cap}_{\sigma}(\Lda)=0$ and $s>n-2\sigma$. Then $\mu_\sigma(\Lda)=0$, and thus, for all $i\ge 1$, there exists $f_i \in C^\infty_c (\R^{n+1})$ such that $\Lda\subset \{f_i\ge 1\}$ and
\[
 \int_{\R^{n+1}} |t|^{1-2\sigma}|\nabla_X f_i|^2\,\ud X\le 2^{-i}.
\]
Let $g=\sum_{i=1}^\infty f_i$. Then
\[
( \int_{\R^{n+1}} |t|^{1-2\sigma}|\nabla_X g|^2 \,\ud X)^{1/2}\le \sum_{i=1}^\infty( \int_{\R^{n+1}} |t|^{1-2\sigma}|\nabla_X f_i|^2 \,\ud X)^{1/2}<\infty.
\]
Note that $\Lda\subset \text{interior of }\{g\ge m\}$ for all $m\ge 1$. Fix any $y\in\Lda$, then for $r$ small enough that $B_r(y)\subset \text{interior of }\{g\ge m\}$, $(g)_{y,r}:=\dashint_{B_r(y)}g\ge m$, and therefore, $(g)_{y,r}\to\infty$ as $r\to 0$. We are going to show that
\[
\limsup_{r\to 0}\frac{1}{r^{s}}\int_{\mathcal B_r(y,0)}|t|^{1-2\sigma}|\nabla_Xg|^2\,\ud X=+\infty.
\]
If not, then there exists a constant $M<\infty$ such that
\[
\frac{1}{r^s}\int_{\mathcal B_r(y,0)}|t|^{1-2\sigma}|\nabla_Xg|^2 \,\ud X\le M
\]
for all $0<r\le 1$. Then by the Poincar\'e's inequality in Lemma \ref{lem:ponicare},
\begin{equation}\label{eq:poincare}
\dashint_{B_r(y)}|g-(g)_{y,r}|^2 dx\le C r^{2\sigma+1} \dashint_{\mathcal B_r(y,0)}|t|^{1-2\sigma}|\nabla_X g|^2\,\ud X\le Mr^{s-(n-2\sigma)}.
\end{equation}
Thus,
\begin{align*}
|(g)_{y,r/2}-(g)_{y,r}|&=Cr^{-n}|\int_{B_{r/2}(y)} g-(g)_{y,r} dx|\\
&\le C\dashint_{B_r(y)}|g-(g)_{y,r}|dx\\
&\le C(\dashint_{B_r(y)}|g-(g)_{y,r}|^2dx)^{1/2}\\
&\le C r^{\frac{s-(n-2\sigma)}{2}}.
\end{align*}
Hence if $k>j$, we have
\[
|(g)_{y,1/2^k}-(g)_{y,1/2^j}|\le \sum_{i=j+1}^k|(g)_{y,1/2^i}-(g)_{y,1/2^{i-1}}|\le C\sum_{i=j+1}^k(2^{1-i})^{\frac{s-(n-2\sigma)}{2}}.
\]
Since $s>n-2\sigma$, $\{(g)_{y,1/2^k}\}_{k=1}^\infty$ is a Cauchy sequence. This contradicts with that $(g)_{y,r}\to \infty$ as $r\to 0$.

Therefore,
\begin{align*}
\Lda&\subset\{y\in\R^n: \limsup_{r\to 0}\frac{1}{r^{s}}\int_{\mathcal B_r(y,0)}|t|^{1-2\sigma}|\nabla_Xg|^2 \,\ud X=\infty\}\\
&\subset \{y\in\R^n: \limsup_{r\to 0}\frac{1}{r^{s}}\int_{\mathcal B_r(y,0)}|t|^{1-2\sigma}|\nabla_Xg|^2 \,\ud X>2\}\equiv \Lda_s.
\end{align*}
Let $V\subset\R^n$ be an open bounded neighborhood of $\Lda$ and $\delta>0$. Let $\tilde V=V\times (-R,R)$, where $R=\text{diam}(V)\le 2\text{diam}(\Lda)$. By Vitalli's covering theorem, there exists countably many disjoint open balls $\{B_{r_i}(y_i)\}_{i=1}^\infty$ such that every $B_{r_i}(y_i)\subset V$, $r_i<\delta$, $\frac{1}{r^{s}}\int_{\mathcal B_r(y_i,0)}|t|^{1-2\sigma}|\nabla_Xg|^2 \,\ud X>1$ and $\Lda\subset\cup_{i=1}^\infty B_{5r_i}(x_i)$. Thus
\[
\sum_{i=1}^\infty r_i^{s}\le C\int_{\cup_{i=1}^\infty \mathcal B_{5r_i}(x_i,0)}|t|^{1-2\sigma}|\nabla_Xg|^2 \,\ud X\le C\int_{\tilde V}|t|^{1-2\sigma}|\nabla_Xg|^2 \,\ud X.
\]
Since $s<n$, it follows that $\Lda$ is of $\R^n$-Lebesgue measure zero. Thus, we can choose $|V|_{\R^n}$ arbitrary small and thus $|\tilde V|_{\R^{n+1}}$ arbitrary small. Since
\[
\lim_{|\tilde V|\to 0}\int_{\tilde V}t^{1-2\sigma}|\nabla_Xg|^2\,\ud X=0,
\]
we have that
\[
\MH^{s}(\Lda)=0.
\]
\end{proof}

For a compact set that $\text{Cap}_\sigma(\Lda)=0$, we will have the following observation.

Suppose $\Lda\subset\R^n$ is compact such that $\text{Cap}_\sigma(\Lda)=0$. Then for every $\va$ there exists $G\in C_c^\infty(\R^{n+1})$, $G\ge 1$ on $\Lda$ such that
\[
 \int_{\mathbb R^{n+1}} |t|^{1-2\sigma}|\nabla_X G|^2\,\ud X\le \va.
\]  
We may further assume that $G\le 2$ in $\R^{n+1}$. Let $f(x)=G(x,0)$ and $F(x,t)$ be defined as in \eqref{eq:extension integral}. Then we have
\[
 \int_{\mathbb R^{n+1}} |t|^{1-2\sigma}|\nabla_X F|^2\,\ud X\le  \int_{\mathbb R^{n+1}} |t|^{1-2\sigma}|\nabla_X G|^2\,\ud X\le \va.
\]  
Let $r_0>0$ be such that $\Lda\subset B_{r_0}$, and $g\in C_c^\infty(\B_{r_0})$ that $g\ge 1$ on $\Lda$. Then $Fg\in W_c\cap C_c^0(\B_{r_0})$ and  $Fg\ge 1$ on $\Lda$. Moreover, we have
\[
\begin{split}
 \int_{\mathbb R^{n+1}} |t|^{1-2\sigma}|\nabla_X (Fg)|^2\,\ud X&\le 2 \int_{\mathbb R^{n+1}} |t|^{1-2\sigma}|\nabla_X F|^2g^2\,\ud X+ 2\int_{\mathbb R^{n+1}} |t|^{1-2\sigma}|\nabla_X g|^2F^2\,\ud X\\
 &\le 2\|g\|_{L^\infty}\va +  2\|\nabla_Xg\|_{L^\infty}\int_{\B_{r_0}} |t|^{1-2\sigma}F^2\,\ud X.
\end{split}.
\]
For the second term on the right hand side, we have
\[
\begin{split}
\int_{\B_{r_0}} |t|^{1-2\sigma}F^2\,\ud X&\le \int_{\B_{r_0}\cap\{|t|\le \va\}} |t|^{1-2\sigma}F^2\,\ud X+\int_{\B_{r_0}\cap\{|t|\ge \va\}} |t|^{1-2\sigma}F^2\,\ud X\\
&\le 4r_0^n\frac{C_n}{2-2\sigma}\va^{2-2\sigma}+(r_0^{1-2\sigma}+\va^{1-2\sigma})\int_{\B_{r_0}\cap\{|t|\ge \va\}} F^2\,\ud X.
\end{split}
\]
Furthermore,
\[
\begin{split}
\int_{\B_{r_0}\cap\{|t|\ge \va\}} F^2\,\ud X&\le C_n r_0^{1+2\sigma} (\int_{\B_{r_0}\cap\{|t|\ge \va\}} |F|^{\frac{2(n+1)}{n-2\sigma}}\,\ud X)^{\frac{n-2\sigma}{n+1}}\\
&\le C_n r_0^{1+2\sigma} (\int_{\R^{n+1}_+} |F|^{\frac{2(n+1)}{n-2\sigma}}\,\ud X)^{\frac{n-2\sigma}{n+1}}\\
&\le C_n r_0^{1+2\sigma} \|F(x,0)\|^2_{L^{\frac{2n}{n-2\sigma}(\R^n)}}\\
&\le C_n r_0^{1+2\sigma} \int_{\mathbb R^{n+1}} |t|^{1-2\sigma}|\nabla_X F|^2\,\ud X\\
&\le C_n r_0^{1+2\sigma} \va,
\end{split}
\]
where we used H\"older's inequality in the first inequality, we used Lemma 1 in \cite{Chenshibing} in the third inequality, and the trace inequality in the last inequality. Combining the above inequalities, we have
\[
\begin{split}
& \int_{\mathbb R^{n+1}} |t|^{1-2\sigma}|\nabla_X (Fg)|^2\,\ud X\\
 &\le 2\|g\|_{L^\infty}\va +  2\|\nabla_Xg\|_{L^\infty}\left(4r_0^n\frac{C_n}{2-2\sigma}\va^{2-2\sigma}+(r_0^{1-2\sigma}+\va^{1-2\sigma})C_n r_0^{1+2\sigma} \va\right).
\end{split}
\]
By taking a mollification $\eta_\delta*(2Fg)$, we obtain $h\in C^\infty_c(\B_{r_0})$ for $\delta$ small such that $h\ge 1$ on $\Lda$ and
\[
\int_{\mathbb R^{n+1}} |t|^{1-2\sigma}|\nabla_X h|^2\,\ud X\le C(n,\sigma,r_0)(\va+\va^{2-2\sigma}),
\]
where $C(n,\sigma,r_0)$ is a positive constant depending only on $n,\sigma,r_0$. Since $\va$ is arbitrary, we have that
\[
\mu_\sigma(\Lda, \B_{r_0})=0.
\]
for every $r_0>0$ such that $\Lda\subset  \B_{r_0}$.

Therefore, it follows from Lemma 2.9  in \cite{HKM} that Lemma 7.34 in \cite{HKM} applies to $\Lda$ when $\text{Cap}_\sigma(\Lda)=0$, so that $\Lda$ is removable for super-solutions. This is where we use the assumption that the singular (closed) set has zero fractional capacity. Consequently, we have the following maximum principle (Proposition \ref{prop:liminf}), which is crucial for our proofs.

We say that $U\in L^\infty_{\rm loc}(\overline {\R^{n+1}_+})$
if $U\in L^\infty(\overline {\B_R}^+ )$ for any $R>0.$ Similarly, we say $U\in W^{1,2}_{\rm loc}(t^{1-2\sigma},\B_1^+\setminus\Lda)$ if $U\in W^{1,2}(t^{1-2\sigma}, \B_2^+\setminus \overline \calO)$ for all open neighborhood $\calO\subset\overline\R^{n+1}_+$ of $\Lda$.

\begin{prop}\label{prop:liminf} Suppose $\Lda\subset \R^n$ is compact and $\capac_\sigma(\Lda)=0$,  $U\in W^{1,2}_{\rm loc}(t^{1-2\sigma},\B_1^+\setminus\Lambda)\cap C(\overline \B_1^+ \setminus \Lda)$ and
\[
\liminf_{Y\to (x,0)} U(Y)>-\infty\quad\mbox{for all }x\in\Lambda\ \ \mbox{and for all } Y\in\B_1^+.
\]
Suppose $U$ solves
\[
\begin{cases}
\mathrm{div}(t^{1-2\sigma} \nabla_{X} U)\le 0 & \quad \mbox{in }\B_1^+,\\
\frac{\pa U}{\pa \nu^\sigma} \ge 0 &\quad \mbox{on }\pa' \B_1^+\setminus \Lda,
\end{cases}
\]
in the weak sense. Then
\[
U(X)\ge\inf_{\partial'' \B_1^+} U\quad\mbox{for all }X\in \overline \B_1^+ \setminus \Lda.
\]
\end{prop}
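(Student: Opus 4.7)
Set $m:=\inf_{\partial''\B_1^+}U$ and $V:=(m-U)^+$; the goal is to show that $V\equiv 0$ on $\overline{\B_1^+}\setminus\Lambda$. From the hypothesis $\liminf_{Y\to x}U(Y)>-\infty$ at each $x\in\Lambda$ together with the continuity of $U$ on $\overline{\B_1^+}\setminus\Lambda$, the function $U$ is uniformly bounded below on $\B_1^+$, so $V$ is bounded. By definition of $m$, $V=0$ on $\partial''\B_1^+$, and $V\in W^{1,2}_{\loc}(t^{1-2\sigma},\B_1^+\setminus\Lambda)$ as a truncation of $U$.

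The key ingredient is the family of capacity cutoffs constructed in the paragraph immediately preceding the statement: for every $\varepsilon>0$ there exists $h_\varepsilon\in C^\infty_c(\R^{n+1})$ with $0\le h_\varepsilon\le 1$, with $h_\varepsilon\equiv 1$ on an open neighborhood of $\Lambda$ (which one secures, for instance, by replacing the produced cutoff by $\min(2h_\varepsilon,1)$ and then re-mollifying), and with $\int_{\R^{n+1}}|t|^{1-2\sigma}|\nabla_X h_\varepsilon|^2\,\ud X\le\varepsilon$; running the construction on open neighborhoods of $\Lambda$ that shrink to $\Lambda$ also arranges $h_\varepsilon\to 0$ pointwise on $\R^{n+1}\setminus\Lambda$ as $\varepsilon\to 0$. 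I then test the weak super-solution inequality with the nonnegative function $\psi_\varepsilon:=V(1-h_\varepsilon)^2$. This is an admissible test function: it vanishes on $\partial''\B_1^+$ (since $V$ does), it vanishes on an open neighborhood of $\Lambda$ (since $h_\varepsilon\equiv 1$ there), and it lies in $W^{1,2}(t^{1-2\sigma},\B_1^+)$, so the Neumann condition $\partial U/\partial\nu^\sigma\ge 0$ on $\partial'\B_1^+\setminus\Lambda$ contributes a nonnegative boundary term after integration by parts, leading to $\int_{\B_1^+}t^{1-2\sigma}\nabla_X U\cdot\nabla_X\psi_\varepsilon\,\ud X\ge 0$.

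Expanding $\nabla_X\psi_\varepsilon$ and using $\nabla_X V=-\nabla_X U$ a.e.\ on $\{V>0\}$, this inequality rearranges to
\[
\int_{\B_1^+}t^{1-2\sigma}(1-h_\varepsilon)^2|\nabla_X V|^2\,\ud X\le 2\int_{\B_1^+}t^{1-2\sigma}V(1-h_\varepsilon)|\nabla_X V||\nabla_X h_\varepsilon|\,\ud X.
\]
Young's inequality with weight $\tfrac12$ absorbs half the left-hand side into the right, producing
\[
\int_{\B_1^+}t^{1-2\sigma}(1-h_\varepsilon)^2|\nabla_X V|^2\,\ud X\le 4\|V\|_\infty^2\int_{\R^{n+1}}|t|^{1-2\sigma}|\nabla_X h_\varepsilon|^2\,\ud X\le 4\|V\|_\infty^2\varepsilon.
\]
Sending $\varepsilon\to 0$ and applying Fatou's lemma, with $(1-h_\varepsilon)^2\to 1$ a.e.\ on $\B_1^+\setminus\Lambda$, gives $\nabla_X V\equiv 0$ a.e., so $V$ is constant on each connected component of $\B_1^+\setminus\Lambda$. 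By Theorem~\ref{thm:hausdroff2} the Hausdorff dimension of $\Lambda$ is at most $n-2\sigma<n$, so $\Lambda$ cannot disconnect $\B_1^+$; combined with the continuous boundary value $V=0$ on $\partial''\B_1^+$, this forces $V\equiv 0$.

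The main technical obstacle I anticipate is the admissibility bookkeeping for $\psi_\varepsilon$ in the Neumann weak formulation---one must promote the bare capacity inequality $h_\varepsilon\ge 1$ on $\Lambda$ to equality on a full open neighborhood of $\Lambda$ so that no trace of the test function on the singular set is required, and simultaneously ensure $h_\varepsilon\to 0$ a.e.\ off $\Lambda$. A conceptually cleaner alternative, still relying on $\capac_\sigma(\Lambda)=0$, is to invoke the removability statement \cite[Lemma~7.34]{HKM}, which applies here because the discussion immediately preceding the statement already establishes $\mu_\sigma(\Lambda,\B_{r_0})=0$, to first extend $U$ as a weak super-solution across $\Lambda$ onto all of $\B_1^+$, and then apply the classical weak maximum principle for the $A_2$-weighted degenerate operator with Neumann boundary condition on $\partial'\B_1^+$.
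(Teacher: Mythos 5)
Your main argument is correct and takes a genuinely different route from the paper. The paper first truncates, setting $H^-=\min\{U,m\}$, then performs an \emph{even reflection} in $t$ to produce a weak super-solution $H$ of $\mathrm{div}(|t|^{1-2\sigma}\nabla_X H)\le 0$ on $\B_1\setminus\Lambda$, and then simply \emph{cites} the removability theorem for $A_2$-weighted super-solutions (Lemma 7.34 of \cite{HKM}, legitimated by the earlier observation that $\mu_\sigma(\Lambda,\B_{r_0})=0$) to conclude $H$ is a super-solution on all of $\B_1$; the ordinary interior maximum principle then gives $H\ge m$. Your approach instead works directly with $V=(m-U)^+$ on the half-ball and carries out the Caccioppoli estimate by hand: testing the Neumann super-solution inequality against $V(1-h_\varepsilon)^2$, absorbing by Young, and letting the capacity cutoffs $h_\varepsilon$ degenerate. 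In substance you are \emph{re-deriving} the removability mechanism rather than invoking it, so the two proofs rest on the same capacity input; the paper's is shorter because it outsources the analytic heavy lifting and the reflection makes the boundary condition disappear, while yours is more self-contained and makes the role of $\capac_\sigma(\Lambda)=0$ completely explicit. (You in fact notice the paper's route yourself in your closing paragraph.)

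Two small remarks. First, the connectivity worry at the end is moot: $\Lambda$ lives on the flat boundary $\{t=0\}$, so $\B_1^+\setminus\Lambda=\B_1^+$ is already connected and the gradient vanishing on $\B_1^+$ alone forces $V$ constant; no Hausdorff-dimension argument is needed. Second, the admissibility issue you flag is real but standard: $\psi_\varepsilon$ vanishes identically near $\Lambda$ but only in the trace sense on $\partial''\B_1^+$, so strictly one should either approximate $V$ by functions compactly supported away from $\partial''\B_1^+$ using the density of such functions in the zero-trace $W^{1,2}(t^{1-2\sigma})$ space, or insert a second radial cutoff near the curved boundary and let it relax; either fix is routine given $V=0$ continuously on $\partial''\B_1^+$.
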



\begin{proof} Let
$$
m:=\inf_{\partial''\mathcal{B}_{1}^{+}} U
$$
and
$$
H^{-}(x,t)=\min\{U(x,t),m\}.
$$
We make an even extension of $H^-$:
\[
H(x,t)=\begin{cases}
H^-(x,t)\quad\mbox{if }t\ge 0\\
H^-(x,-t)\quad\mbox{if }t\le 0.
\end{cases}
\]
Then it follows that
$$
\mathrm{div}\left(t^{1-2\sigma}\nabla_X H(X)\right)\leq 0 \ \ \textrm{in} \ \ \mathcal B_1\setminus\Lambda.
$$

Notice that for every $x\in\Lda\cap\B_1$ there exists $r(x)>0$ such that $H$ is bounded in $\B_{r(x)}(x)\setminus\Lda$. Since $\capac_\sigma(\Lambda)=0$, it follows from Lemma 7.34 \cite{HKM} that the set $\Lda$ is removable, that is, $H\in W^{1,2}_{loc}(|t|^{1-2\sigma},\B_1)$ and
$$
\mathrm{div}\left(t^{1-2\sigma}\nabla_X H(X)\right)\leq 0 \ \ \textrm{in} \ \ \B_1
$$
in the sense of distribution. It follows from standard maximum principle that
\begin{equation}\label{eq:infimum}
H(X)\geq \inf_{|Y|= 1}H(Y)=m, \ \ |X|\leq 1.
\end{equation}
We conclude that
$$
U(X)\geq m, \ \ X\in\overline{\mathcal B_{1}^{+}}\setminus \Lambda.
$$
\end{proof}

The following Harnack inequality will be used frequently in our proof. We state it here for convenience. See \cite{CS} or \cite{TX} for the proof.
\begin{prop}\label{prop:harnack} Let $0\leq U \in W^{1,2}(t^{1-2\sigma}, \B_R^+)$ be a weak solution of
\[
\begin{cases}
\mathrm{div}(t^{1-2\sigma} \nabla_{X} U)=0 & \quad \mbox{in }\B_R^+,\\
\frac{\pa U}{\pa \nu^\sigma} = a(x) U(x,0) &\quad \mbox{on }\pa' \B_R.
\end{cases}
\]
 If $a\in L^p(B_R)$ for some $p>n$, then we have
\[
\sup_{\overline \B_{R/2}^+} U\leq C(R) \inf_{\overline \B_{R/2}^+} U,
\]
where $C$ depends only on $n,\sigma, R$ and $\|a\|_{L^p(B_{R})}$.
\end{prop}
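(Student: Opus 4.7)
The plan is to prove Proposition \ref{prop:harnack} by Moser iteration adapted to the $A_2$-weighted degenerate elliptic setting, treating the Neumann coefficient $a(x)U(x,0)$ as a lower-order perturbation. I would first extend $U$ evenly across $\{t=0\}$ to obtain $\tilde U$ on the full ball $\B_R$ satisfying
\[
\int_{\B_R} |t|^{1-2\sigma}\,\nabla \tilde U\cdot\nabla\varphi\,\ud X = 2\int_{B_R} a(x)\,\tilde U(x,0)\,\varphi(x,0)\,\ud x
\]
for every $\varphi\in C^\infty_c(\B_R)$. Since $|t|^{1-2\sigma}\in A_2$, the Fabes-Kenig-Serapioni framework supplies a weighted Sobolev inequality with some gain $\chi>1$, together with the trace embedding $W^{1,2}(|t|^{1-2\sigma},\B_r)\hookrightarrow L^{2n/(n-2\sigma)}(B_r)$.

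Next, I would test the equation against $\eta^2(\tilde U+\delta)^\beta$ with $\eta$ a smooth cut-off, $\delta>0$ small, and $\beta\in\R\setminus\{-1\}$. Applying Cauchy-Schwarz to the cross-term between $\nabla\tilde U$ and $\nabla\eta$ yields a Caccioppoli-type inequality of the schematic form
\[
\int_{\B_{r_1}} |t|^{1-2\sigma}|\nabla(\eta W^{(\beta+1)/2})|^2\,\ud X \le C(\beta)\int_{\B_{r_2}} |t|^{1-2\sigma}|\nabla\eta|^2 W^{\beta+1}\,\ud X + C(\beta)\int_{B_{r_2}} |a|\,\eta^2 W^{\beta+1}\,\ud x,
\]
where $W=\tilde U+\delta$ and $r_1<r_2\le R$. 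The boundary integral is bounded by $\|a\|_{L^p(B_{r_2})}\|\eta W^{(\beta+1)/2}\|_{L^{2p'}(B_{r_2})}^2$; because $p>n$ places $2p'$ strictly below the critical trace exponent $2n/(n-2\sigma)$, a short interpolation followed by the trace embedding allows the top-order piece to be absorbed into the left-hand side on a sufficiently small ball.

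Combining this energy bound with the weighted Sobolev inequality gives a reverse Hölder chain $\|\tilde U\|_{L^{(\beta+1)\chi}}\le (C(\beta))^{1/(\beta+1)}\|\tilde U\|_{L^{\beta+1}}$ on nested half-balls. Iterating over $\beta\to\infty$ produces the local boundedness $\sup_{\B_{R/2}^+}\tilde U\le C\|\tilde U\|_{L^2(|t|^{1-2\sigma},\B_R^+)}$. Iterating over $\beta\in(-1,0)$, together with a BMO estimate on $\log W$ obtained by testing against $\eta^2 W^{-1}$ and invoking John-Nirenberg, yields the weak Harnack bound $\|\tilde U\|_{L^{q_0}(|t|^{1-2\sigma},\B_{R/2}^+)}\le C\inf_{\B_{R/2}^+}\tilde U$ for some $q_0>0$. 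Chaining the two bounds, and sending $\delta\to 0$ at the end, delivers the Harnack inequality with a constant depending only on $n,\sigma,R$ and $\|a\|_{L^p(B_R)}$.

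The main obstacle is tracking the dependence of $C(\beta)$ as $|\beta|\to\infty$ while the boundary interpolation is invoked at every iteration step, so that the resulting product of constants converges. This bookkeeping is carried out in Cabré-Sire \cite{CS} and (with genuine $L^p$ boundary coefficients) in \cite{TX}, so the proposition can be quoted rather than reproved here.
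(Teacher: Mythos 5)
The paper does not actually prove Proposition \ref{prop:harnack}: the text immediately following the statement reads ``See \cite{CS} or \cite{TX} for the proof.'' Your outline faithfully reproduces the Moser-iteration strategy underlying those references---even reflection across $\{t=0\}$, the $\eta^2 W^{\beta}$ test function, a Caccioppoli estimate, absorption of the boundary term via the weighted trace inequality, iteration in $\beta>0$ for the $\sup$, iteration in $\beta\in(-1,0)$ plus John--Nirenberg for the $\inf$, and the bookkeeping on $C(\beta)$---and you end by citing the same two papers, so in substance you and the authors are pointing to exactly the same argument.

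One intermediate claim is not correct as stated and deserves a flag. You write that $p>n$ places $2p'$ strictly below the critical trace exponent $2n/(n-2\sigma)$. But $p>n$ is equivalent to $2p' < 2n/(n-1)$, and $2n/(n-1)\le 2n/(n-2\sigma)$ holds if and only if $\sigma\ge 1/2$. The scaling-sharp hypothesis that makes your interpolation close is $p>n/(2\sigma)$ (this is precisely the exponent that keeps $\|a\|_{L^p}$ invariant under the natural parabolic scaling $a\mapsto\lambda^{2\sigma}a(\lambda\cdot)$), and for $\sigma<1/2$ this is strictly stronger than $p>n$. So as written, the absorption step fails on the range $n<p<n/(2\sigma)$ when $\sigma<1/2$. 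In this paper that gap is harmless, because everywhere Proposition \ref{prop:harnack} is invoked the coefficient $a$ is of the form $u^{4\sigma/(n-2\sigma)}$ on a region where $u$ is locally bounded, i.e.\ $a\in L^\infty_{\mathrm{loc}}$; but if you want to state and prove the proposition at the generality of an $L^p$ coefficient, you should either assume $p>n/(2\sigma)$ or give a different treatment of the boundary term for small $\sigma$.
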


\section{Upper bound estimate near a singular set} \label{sec:3}

\begin{proof}[Proof of Theorem \ref{thm:a}]Suppose the contrary that there exists a sequence $\{x_j\} \subset B_1\setminus\Lda$ such that
\[
d_j:=\dist(x_j,\Lda)\to 0\quad \mbox{as } j\to \infty,
\]
but
\be\label{eq:cl1}
|d_j|^{\frac{n-2\sigma}{2}}u(x_j)\to \infty\quad \mbox{as }j\to \infty.
\ee
Without loss of generality, we may assume that $0\in\Lambda$ and $x_j\to 0$ as $j\to\infty$.

Consider
\[
v_j(x):=\left(\frac{|d_j|}{2}-|x-x_j|\right)^{\frac{n-2\sigma}{2}} u(x),\quad |x-x_j|\leq \frac{|d_j|}{2}.
\]
Let $|\bar x_j-x_j|<\frac{|d_j|}{2}$ satisfy
\[
v_j(\bar x_j)=\max_{|x-x_j|\leq \frac{|d_j|}{2}}v_j(x),
\]
and let
\[
2\mu_j:=\frac{|d_j|}{2}-|\bar x_j-x_j|.
\]
Then
\be \label{eq:cl2}
0<2\mu_j\leq \frac{|d_j|}{2}\quad\mbox{and}\quad \frac{|d_j|}{2}-|x-x_j|\ge\mu_j \quad \forall ~ |x-\bar x_j|\leq \mu_j.
\ee
By the definition of $v_j$, we have
\be \label{eq:cl3}
(2\mu_j)^{\frac{n-2\sigma}{2}}u(\bar x_j)=v_j(\bar x)\ge v_j(x)\ge (\mu_j)^{\frac{n-2\sigma}{2}}u(x)\quad \forall ~ |x-\bar x_j|\leq \mu_j.
\ee
Thus, we have
\[
2^{\frac{n-2\sigma}{2}}u(\bar x_j)\ge u(x)\quad \forall ~ |x-\bar x_j|\leq \mu_j.
\]
We also have
\be\label{eq:cl4}
(2\mu_j)^{\frac{n-2\sigma}{2}}u(\bar x_j)=v_j(\bar x_j)\ge v(x_j)= \left(\frac{|d_j|}{2}\right)^{\frac{n-2\sigma}{2}}u(x_j)\to \infty.
\ee
Now, consider
\[
W_j(y,t)=\frac{1}{u(\bar x_j)}U\left(\bar x_j+\frac{y}{u(\bar x_j)^{\frac{2}{n-2\sigma}}}, \frac{t}{u(\bar x_j)^{\frac{2}{n-2\sigma}}}\right ), \quad (y,t)\in \om_j,
\]
where
\[
\om_j:=\left\{(y,t)\in \R^{n+1}_+: \left(\bar x_j+\frac{y}{u(\bar x_j)^{\frac{2}{n-2\sigma}}},\frac{t}{u(\bar x_j)^{\frac{2}{n-2\sigma}}}\right)\in  \overline\B^+_{1}\setminus \Lda \right\}
\]
and let $w_j(x)=W_j(x,0)$ if $x\not\in\Lda$. Then $W_j$ satisfies $w_j(0)=1$ and
\be \label{eq:ext1}
\begin{cases}
\mathrm{div}(t^{1-2\sigma} \nabla W_j)=0 & \quad \mbox{in }\om_j,\\
\frac{\pa W_j}{\pa \nu^\sigma} = w_j(x)^{\frac{n+2\sigma}{n-2\sigma}} &\quad \mbox{on }\pa' \om_j.
\end{cases}
\ee
Moreover, it follows from \eqref{eq:cl3} and \eqref{eq:cl4} that
\[
w_j(y)\leq 2^{\frac{n-2\sigma}{2}} \quad\mbox{in } B_{R_j},
\]
where $R_j:=\mu_j u(\bar x_j)^{\frac{2}{n-2\sigma}}\to \infty$ as $j\to \infty$.

By Proposition \ref{prop:harnack}, for any given $\bar t>0$ we have
\[
0\leq W_j \leq C(\bar t)\quad \mbox{in }B_{R_j/2}\times [0,\bar t),
\]
where $C(\bar t)$ depends only on $n, \sigma$ and $\bar t$.
Thus, after passing to a subsequence, we have, for some nonnegative functions
$W\in W^{1,2}_{loc}(t^{1-2\sigma},\overline{\mathbb{R}^{n+1}})\cap C^{\al}_{loc}(\overline{\mathbb{R}^{n+1}})$ and $w\in C^2(\R^n)$,
\[
\begin{cases}
W_j&\rightharpoonup W\quad\mbox{weakly in }W^{1,2}_{loc}(t^{1-2\sigma},\R^{n+1}_+),\\
W_j&\rightarrow W\quad\mbox{in }C^{\al/2}_{loc}(\overline{\R^{n+1}_+}),\\
w_j&\rightarrow w\quad\mbox{in }C^2_{loc}(\R^n),
\end{cases}
\]
where $w(x)=W(x,0)$. Moreover, $W$ satisfies
\be \label{eq:ext2}
\begin{cases}
\mathrm{div}(t^{1-2\sigma} \nabla W)=0 & \quad \mbox{in }\R^{n+1}_+,\\
\frac{\pa W}{\pa \nu^\sigma} = w^{\frac{n+2\sigma}{n-2\sigma}}& \quad \mbox{on }\pa\R^{n+1}_+,
\end{cases}
\ee
and $w(0)=1$.
By the Liouville theorem in \cite{JLX}, we have,
\be \label{eq:cl5}
w(y):=W(y,0)= \left(\frac{1}{1+|y|^2}\right)^\frac{n-2\sigma}{2},
\ee
upon some multiple, scaling and translation.

On the other hand, we are going to show that
\be\label{eq:aim1}
w_{\lda, x}(y)\leq w(y)\quad \forall~\lda>0, x\in \R^n, ~ |y-x|\ge\lda.
\ee
By an elementary calculus lemma in \cite{LZhang}, \eqref{eq:aim1} implies that $w\equiv constant$. This contradicts to \eqref{eq:cl5}.

Let us fix $x_0\in\R^n$ and $\lda_0>0$. Then for all $j$ large, we have $|x_0|<\frac{R_j}{10}, 0<\lda_0<\frac{R_j}{10}$. For $\lda>0$, we let
\[
(W_j)_{X,\lda }(Y):= \left(\frac{\lda }{|Y-X|}\right)^{n-1}W_j\left(X+\frac{\lda^2(Y-X)}{|Y-X|^2}\right),
\]
for $Y\in \om_j$ with $|Y-X|\geq \lda$. Let $X_0=(x_0,0)$.

\medskip

\emph{Claim 1}: There exists a positive real number $\lda_3$ such that for any $0<\lda<\lda_3$, we have
\[
(W_j)_{X_0,\lda }(\xi)\leq W_j(\xi) \quad \text{in } \om_j\backslash \B^+_{\lda}(X_0).
\]
The proof of Claim 1 consists of two steps as the proof of Lemma 3.2 in \cite{JLX}.

\textit{Step 1.} We show that there exist $0<\lda_1<\lda_2<\lda_0$, which are independent on $j$, such that
\[
(W_j)_{X_0,\lda }(\xi)\leq W_j(\xi), ~\forall~0<\lda<\lda_1,~\lda<|\xi-X_0|<\lda_2.
\]
For every $0<\lda<\lda_1<\lda_2$, $\xi\in\pa'' \B_{\lda_2}(X_0)$, we have $X_0+\frac{\lda^2(\xi-X_0)}{|\xi-X_0|^2}\in \B^+_{\lda_2}(X_0)$. Thus we can choose $\lda_1=\lda_1(\lda_2)$ small such that
\begin{equation*}
\begin{split}
(W_j)_{X_0,\lda }(\xi)&=\left(\frac{\lambda}{|\xi-X_0|}\right)^{n-2\sigma}W_j\left(X_0+\frac{\lda^2(\xi-X_0)}{|\xi-X_0|^2}\right)\\
&\leq\left(\frac{\lda_1}{\lda_2}\right)^{n-2\sigma}\sup\limits_{\overline{\B_{\lda_2}^+(X_0)}}W_j\leq \inf_{\partial ''{\B_{\lda_2}^+(X_0)}}W_j\leq W_j(\xi),
\end{split}
\end{equation*}
where we used that $w_j\to w$ in $C^2(B_{\lda_0}(X_0))$ and Harnack inequality.
Hence
\[
(W_j)_{X_0,\lda }\leq W_j\quad \mbox{on } \partial ''(\B^+_{\lda_2}(X_0)\backslash \B^+_{\lda}(X_0))
\]
  for all $\lda_2>0$ and $0<\lda<\lda_1(\lda_2)$.

We will show that $(W_j)_{X_0,\lda }\leq W_j$ on $(\B^+_{\lda_2}(X_0)\backslash \B^+_{\lda}(X_0))$ if $\lda_2$ is small and $0<\lda<\lda_1(\lda_2)$.
Since $(W_j)_{X_0,\lda }$ also satisfies \eqref{eq:ext1} in $\B_{\lda_2}(X_0)^+\setminus\overline{\B_{\lda_1}^+(X_0)}$, we have
\begin{equation}\label{diff}
\begin{cases}
\mathrm{div}(t^{1-2\sigma}\nabla ((W_j)_{X_0,\lda }-W_j))&=0\quad \text{in}\quad \B_{\lda_2}^+(X_0)\backslash \overline{\B_{\lda}^+(X_0)},\\
\lim\limits_{t\to 0}t^{1-2\sigma}\partial_t ((W_j)_{X_0,\lda }-W_j)&\\ =
W_j^{\frac{n+2\sigma}{n-2\sigma}}(x,0)-(W_j)_{X_0,\lda }^{\frac{n+2\sigma}{n-2\sigma}}(x,0)
&\quad \text{on}\quad \partial '(\B_{\lda_2}^+(X_0)\backslash \overline{\B_{\lda}^+(X_0)}).
\end{cases}
\end{equation}
Let $((W_j)_{X_0,\lda }-W_j)^+:=\max(0,(W_j)_{X_0,\lda }-W_j)$ which equals to $0$ on $\pa''(\B^+_{\lda_2}(X_0)\backslash \B^+_{\lda}(X_0))$. Hence, by a density argument, we can use $((W_j)_{X_0,\lda }-W_j)^+$ as a test function in the definition of weak solution of \eqref{diff}. We will make use of the narrow domain technique
from \cite{BN}. With the help of the mean value theorem, we have
\begin{equation*}
\begin{split}
&\int_{\B_{\lda_2}^+(X_0)\backslash \B_{\lda}^+(X_0)} t^{1-2\sigma}|\nabla((W_j)_{X_0,\lda }-W_j)^+|^2\\
&=\int_{B_{\lda_2}(X_0)\backslash B_{\lda}(X_0)}((W_j)_{X_0,\lda }^{\frac{n+2\sigma}{n-2\sigma}}(x,0)-W_j^{\frac{n+2\sigma}{n-2\sigma}}(x,0))((W_j)_{X_0,\lda }-W_j)^+\\
&\leq C \int_{B_{\lda_2}(X_0)\backslash B_{\lda}(X_0)}(((W_j)_{X_0,\lda }-W_j)^+)^2 (W_j)_{X_0,\lda }^{\frac{4\sigma}{n-2\sigma}}\\
&\leq C\left(\int_{B_{\lda_2}(X_0)\backslash B_{\lda}(X_0)}(((W_j)_{X_0,\lda }-W_j)^+)^\frac{2n}{n-2\sigma}\right)^{\frac{n-2\sigma}{n}}\left(\int_{B_{\lda_2}(X_0)\backslash B_{\lda}(X_0)} (W_j)_{X_0,\lda }^{\frac{2n}{n-2\sigma}}\right)^{\frac{2\sigma}{n}}\\
& \leq C\left(\int_{\B_{\lda_2}^+(X_0)\backslash \B_{\lda}^+(X_0)} t^{1-2\sigma}|\nabla((W_j)_{X_0,\lda }-W_j)^+|^2\right)
\left(\int_{ B_{\lda_2}(X_0)} w_j^{\frac{2n}{n-2\sigma}}\right)^{\frac{2\sigma}{n}},
\end{split}
\end{equation*}
where Proposition 2.1 in \cite{JLX} is used in the last inequality and $C$ is a positive constant depending only on $n$ and $\sigma$. Since $w_j\to w$ in $C^2(B_{\lda_0}(X_0))$,
we can fix $\lda_2$ small independent of $j$ such that
\[
C\left(\int_{B_{\lda_2}} w_j^{\frac{2n}{n-2\sigma}}\right)^{\frac{2\sigma}{n}}<1/2.
\]
Then
\[
\nabla((W_j)_{X_0,\lda }-W_j)^+=0\quad \mbox{in } \B_{\lda_2}^+\backslash \B_{\lda}^+.
\]
Since
\[
((W_j)_{X_0,\lda }-W_j)^+=0 \quad\mbox{on }\partial ''(\B^+_{\lda_2}(X_0)\backslash \B^+_{\lda})(X_0),\]
 we have
 \[
 ((W_j)_{X_0,\lda }-W_j)^+=0\quad \mbox{in } \B_{\lda_2}^+(X_0)\backslash \B_{\lda}^+(X_0).
 \]
 We conclude that
 \[
 (W_j)_{X_0,\lda }\leq W_j \quad\mbox{in } \B^+_{\lda_2}(X_0)\backslash \B^+_{\lda}(X_0)
 \]
 for $0<\lda<\lda_1:=\lda_1(\lda_2)$.
\medskip

\textit{Step 2.}  We show that there exists $\lda_3\in (0,\lda_1)$ such that $\forall~0<\lda<\lda_3$,
\[
 (W_j)_{X_0,\lda }(\xi)\leq W_j(\xi),~\forall |\xi-X_0|>\lda_2,~\xi\in \om_j.
\]
Let $\phi(\xi)=\left(\frac{\lda_2}{|\xi-X_0|}\right)^{n-2\sigma}\inf\limits_{\partial'' \B_{\lda_2}(X_0)} W_j$,
which satisfies
\[
\begin{cases}
\begin{aligned}
  \mathrm{div}(t^{1-2\sigma}\nabla \phi)&=0&\quad& \mbox{in } \R^{n+1}_+\setminus \B_{\lda_2}^+(X_0)\\
   -\lim_{t\to 0}t^{1-2\sigma}\pa_t \phi(x,t)&=0&\quad& \mbox{on } \R^n\setminus \overline{B_{\lda_2}(X_0)},
 \end{aligned}
\end{cases}
\]
 and $\phi(\xi)\leq W_j(\xi)$ on $\partial'' \B_{\lda_2}(X_0)$. Let us examine them on $\pa''\om_j$.

 Since $u\ge 1/C>0$ on $\pa B_{3/2}$, it follows from the Harnack inequality (Proposition \ref{prop:harnack}) that
\be\label{eq:cl20}
W_j\geq \frac{1}{Cu(\bar x_j)}>0 \quad \mbox{on }\pa'' \om_j.
\ee
Note that we assumed $x_j\to 0$ without loss of generality. Then $\frac{|x_j|}{2}\le |\bar x_j|\leq \frac{3|x_j|}{2}<<1$. Thus, for any $\xi\in \pa'' \om_j$, i.e., $\left|\bar X_j+\frac{\xi}{u(\bar x_j)^{\frac{2}{n-2\sigma}}}\right|=1$, we have
\[
|\xi|\approx u(\bar x_j)^{\frac{2}{n-2\sigma}}.
\]
 Thus
 \be\label{eq:cl21}
W_j\geq \frac{1}{Cu(\bar x_j)}>\frac{1}{u^{1.5}(\bar x_j)}>\left(\frac{\lda_2}{|\xi-X_0|}\right)^{n-2\sigma}\inf\limits_{\partial'' \B_{\lda_2}(X_0)} W_j \quad \mbox{on }\pa'' \om_j,
\ee
where we used the fact that $W_j$ converges to a solution $W$ of \eqref{eq:ext2} locally uniformly in the last inequality.
By Proposition \ref{prop:liminf}, we have
\be\label{eq:cl22}
W_j(\xi)\geq \left(\frac{\lda_2}{|\xi-X_0|}\right)^{n-2\sigma}\inf\limits_{\partial'' \B_{\lda_2}(X_0)} W_j,~\forall~|\xi-X_0|>\lda_2,~\xi\in \om_i.
\ee
Let
\[
\lda_3=\min(\lda_1, \lda_2(\inf\limits_{\partial'' \B_{\lda_2}(X_0)} W_j/\sup\limits_{ \B_{\lda_2}(X_0)} W_j)^{\frac{1}{n-2\sigma}}).
\]
Then for any $0<\lda<\lda_3,~|\xi-X_0|\geq \lda_2$, $\xi\in\om_j$, we have
\[
\begin{split}
 (W_j)_{X_0,\lda }(\xi)&\leq (\frac{\lda}{|\xi-X_0|})^{n-2\sigma}W_j(X_0+\frac{\lda^2(\xi-X_0)}{|\xi-X_0|^2})\\
&\leq (\frac{\lda_3}{|\xi-X_0|})^{n-2\sigma}\sup\limits_{\B_{\lda_2}(X_0)}W_j\\
&\leq (\frac{\lda_2}{|\xi-X_0|})^{n-2\sigma}\inf\limits_{\partial ''\B_{\lda_2}(X_0)}W_j\leq W_j(\xi).
\end{split}
\]
Claim 1 is proved.

We define
\[
\bar \lda:=\sup \{0<\mu\le \lda_0 | (W_j)_{X_0,\lda}(\xi)\leq W_{j}(\xi),\quad \forall~|\xi-X_0|\geq \lda, ~\xi\in  \om_j,~\forall~ 0<\lda <\mu\}.
\]
By Claim 1, $\bar\lda$ is well defined.

\medskip

\emph{Claim 2}: $\bar\lda=\lda_0.$

\medskip

To Prove Claim 2, we argue by contradiction. Suppose $\bar\lda<\lda_0$. It follows from the strong maximum principle and \eqref{eq:cl21} that $(W_j)_{X_0,\bar\lda}(\xi)< W_{j}(\xi)$ if $|\xi-X_0|> \bar\lda, ~\xi\in \overline \om_j\setminus\Lda$. For $\delta$ small, which will be fixed later, denote $K_{\delta}=\{\xi\in\om_j: |\xi-X_0|\geq\bar\lda+\delta\}$.
Then by Proposition \ref{prop:liminf}, there exists $c_2=c_2(\delta)$ such that
\[
W_{j}(\xi)-(W_j)_{X_0,\bar\lda}(\xi)>c_2  \ \text{ in }\  K_{\delta}.
\]
By the uniform continuity of $W_j$ on compact sets, there exists $\va$ small such that for all $\bar\lda<\lda<\bar\lda+\va$
\[
(W_j)_{X_0,\bar\lda}-(W_j)_{X_0,\lda}>-c_2/2 \ \text{ in }\  K_{\delta}.
\]
Hence
\[
W_{j}-(W_j)_{X_0,\lda}>c_2/2 \ \text{ in } \ K_{\delta}.
\]
Now let us focus on the region $\{\xi\in\R^{n+1}_+: \lda\leq|\xi-X_0|\leq\bar\lda+\delta\}$.
Using the narrow domain technique as that in Claim 1, we can choose $\delta$
small (notice that we can choose $\va$ as small as we want) such that
\[
W_{j}\geq (W_j)_{X_0,\lda} \ \text{ in } \ \{\xi\in\R^{n+1}_+: \lda\leq|\xi|\leq\bar\lda+\delta\}.
\]
In conclusion, there exists $\va$ such that for all $\bar\lda<\lda<\bar\lda+\va$
\[
 (W_j)_{X_0,\lda}(\xi)\leq W_{j}(\xi),\quad \forall~|\xi-X_0|\geq \lda, ~\xi\in \overline \om_j,
\]
which contradicts with the definition of $\bar\lda$. Claim 2 is proved.

Thus
\[
 (W_j)_{X_0,\lda}(\xi)\leq W_{j}(\xi),\quad \forall~|\xi-X_0|\geq \lda,~\xi\in \overline \om_j\setminus\Lda,~\forall~0<\lda\leq \lda_0.
\]
Sending $j\to \infty$, we have
\[
w_{x_0,\lda}(y)\leq w(y)\quad \forall~0<\lda\leq \lda_0,  ~ |y-x_0|\ge\lda.
\]
Since $x_0, \lda_0$ are arbitrary, \eqref{eq:aim1} has been verified.

Theorem \ref{thm:a} is proved.
\end{proof}

\section{Symmetry for global solutions}\label{sec:4}

\begin{proof}[Proof of Theorem \ref{symTemp}]
Without loss of generality, we assume that
\begin{equation}\label{eq:one infinity}
\limsup_{x\to 0}u(x)=\infty.
\end{equation}
Denote $0_k$ as the origin in $\R^k$.

First, we would like to show that for all $y\in\R^{n-k}\setminus\{0\}$ there exists $\lda_3(y)\in (0,|y|)$ such that for all $0<\lda<\lda_3(y)$ we have
\be\label{eq:bigger}
U_{Y,\lda}(\xi)\le U(\xi)\quad\forall\ |\xi-Y|\ge \lda,~\xi\not\in\R^k\times\{0_{n-k}\}\times\{0\},
\ee
where $Y=(0_k,y,0)\in\R^{n+1}$ and
\[
U_{Y,\lda }(X):= \left(\frac{\lda }{|Y-X|}\right)^{n-2\sigma}U\left(Y+\frac{\lda^2(X-Y)}{|Y-X|^2}\right).
\]
This can be proved similarly to that for $W_j$ in the proof of Theorem \ref{thm:a}, and we sketch the proofs here.
The first step is to show that there exist $0<\lda_1<\lda_2<|y|$ such that
\[
U_{Y,\lda }(\xi)\leq U(\xi), ~\forall~0<\lda<\lda_1,~\lda<|\xi-Y|<\lda_2.
\]
The proof of this step follows exactly the same as that for $W_j$ before. The second step is to show that there exists $\lda_3(y)\in (0,|y|)$ such that \eqref{eq:bigger} holds for all $0<\lda<\lda_3(y)$. To prove this step, we only need to make sure that \eqref{eq:cl21} holds for $U$, i.e.,
\be\label{eq:cl22U}
U(\xi)\geq \left(\frac{\lda_2}{|\xi-X|}\right)^{n-2\sigma}\inf_{\partial'' \B_{\lda_2}(Y)} U,~\forall~|\xi-Y|>\lda_2,~\xi\not\in\R^k\times\{0_{n-k}\}\times\{0\},
\ee
where $\lda_2<|y|$ is small. And \eqref{eq:cl22U} can be proved as follows. Let $\Lda$ is the inversion of $\R^k$ with respect to $\partial B_{\lda_2}(y)$. So $\Lda$ is a $k$-dimensional sphere passing through $y$, and $\Lda\subset B_{\lda_2}(y)$.
\[
\begin{cases}
\mathrm{div}(t^{1-2\sigma} \nabla_{X} U_{Y,\lda_2 })=0 & \quad \mbox{in }\R^{n+1}_+,\\
\frac{\pa }{\pa \nu^\sigma} U_{Y,\lda_2}= U_{Y,\lda_2 }^{\frac{n+2\sigma}{n-2\sigma}} &\quad \mbox{on }B_{\lda_2}(y)\setminus\Lda.
\end{cases}
\]
Since $k\le n-2\sigma$, it follows from Theorem \ref{thm:hausdroff1} that $\text{Cap}_\sigma(\Lda)=0$. By Proposition \ref{prop:liminf}, we have that
\[
U_{Y,\lda_2 }(\xi)\ge \inf_{\partial'' \B_{\lda_2}(Y)} U_{Y,\lda_2 }=\inf_{\partial'' \B_{\lda_2}(Y)} U\quad\mbox{for all }\xi\in \overline\B_{\lda_2}^+(Y)\setminus \Lda.
\]
This will exactly lead to \eqref{eq:cl22U}.

Now, we can define
\[
\begin{split}
&\bar \lda(y):=\\
&\sup \{0<\mu\le |y|\ |\ U_{Y,\lda}(\xi)\leq U(\xi), \forall~|\xi-Y|\geq \lda, ~\xi\not\in \R^k\times\{0_{n-k}\}\times\{0\},~\forall~ 0<\lda <\mu\}.
\end{split}
\]

Secondly, we will show that
\be\label{eq:gseq}
\bar \lda(y)=|y|.
\ee
Suppose $\bar \lda(y)<|y|$ for some $y\neq 0$.  Notice that
\[
U_{Y,\bar\lda(y)}(\xi)\ge U(\xi)\quad\mbox{for all }\xi\in \overline \B^+_{\bar\lda(y)}(y)\setminus\Lda
\]
where $\Lda$ is the inversion of $\R^k$ with respect to $\partial B_{\bar\lda(y)}(y)$. So $\Lda\subset  B_{\bar\lda(y)}(y)$ is a $k$-dimensional sphere passing through $y$. Because of \eqref{eq:one infinity}, we know that $U_{Y,\bar\lda(y)}(\xi)\not\equiv U(\xi)$. Thus, by strong maximum principle we have $U_{Y,\bar\lda(y)}(\xi)>U(\xi)$ for $\xi\in \overline \B^+_{\bar\lda(x)}(y)\setminus\Lda$. Choose $r<\bar\lda(y)$ but close to $\bar\lda(y)$ such that $\Lda\subset  B_{r}(y)$. It follows from Proposition \ref{prop:liminf} that
\[
U_{Y,\bar\lda(y)}(\xi)-U(\xi)\ge \min_{\pa'' \B_{r}^+(Y)}(U_{Y,\bar\lda(y)}(\xi)-U(\xi))=:2c\quad\mbox{for all }\xi\in   \overline\B_{r}^+(Y)\setminus\Lda.
\]
Denote $K_{\va,\delta}=\{\xi\in\B^+_{\bar\lda(y)-\delta}(Y): \dist(\xi,\Lda)>\va\}$. We can choose $\va, \va_1$ sufficiently small ($\va_1<\va$) such that for all $\lambda\in (\bar\lda(y), \bar\lda(y)+\va_1)$,
\[
\Lda_\lda\subset\{\xi: \dist(\xi, \Lda)\le\va\}\quad \mbox{and}\quad  \{Y+\frac{\bar\lda(y)^2}{\lda^2}(\xi-Y): \dist(\xi, \Lda)\le\va\}\subset \overline\B_{r}^+(Y),
\]
where $\Lda_\lda$ is the inversion of $\R^k$ with respect to $B_\lda(y)$. Then for $\xi$ that $\dist(\xi, \Lda)\le\va$ and $\xi\not\in\Lda_\lda$,
\[
\begin{split}
U_{Y,\lda}(\xi)&=\left(\frac{\bar\lda(y)}{\lda}\right)^{n-2\sigma}U_{Y,\bar\lda(y)}\left(Y+\frac{\bar\lda(y)^2}{\lda^2}(\xi-Y)\right)\\
&\ge \left(\frac{\bar\lda(y)}{\bar\lda(y)+\va_1}\right)^{n-2\sigma}\left(U\big(Y+\frac{\bar\lda(y)^2}{\lda^2}(\xi-Y)\big)+ c\right)
\end{split}
\]
Notice that there exist $\va_1$ small that for all $\xi$ that $\dist(\xi, \Lda)\le\va$ and all $\lambda\in (\bar\lda(y), \bar\lda(y)+\va_1)$, we have
\[
\left(\frac{\bar\lda(y)}{\bar\lda(y)+\va_1}\right)^{n-2\sigma}\left(U\big(Y+\frac{\bar\lda(y)^2}{\lda^2}(\xi-Y)\big)+ c\right)\ge U(\xi)+c/2.
\]
This statement can be proved quickly by contradiction arguments. Therefore, we have shown that there exist $\va_1$ small that for all $\xi$ that $\dist(\xi, \Lda)\le\va$, $\xi\not\in\Lda_\lda$,
and all $\lambda\in (\bar\lda(y), \bar\lda(y)+\va_1)$, we have
\[
U_{Y,\lda}(\xi)\ge U(\xi)+c/2.
\]
Choose $\delta$ small, which will be fixed later, there exists $c_2>0$ such that
\[
U_{Y,\bar\lda(y)}(\xi)\ge U(\xi)+c_2\quad\mbox{for all }\xi\in K_{\delta,\va}.
\]
Since $U$ is locally uniformly continuous in $\overline \R^{n+1}_+\setminus\{\R^{k}\}$, we can choose $\va_1$ even smaller such that
\[
U_{Y,\lda}(\xi)-U_{Y,\bar\lda(y)}(\xi)\ge -c_2/2\quad\mbox{for all }\xi\in K_{\delta,\va}.
\]
Hence,
\[
U_{Y,\lda}(\xi)-U(\xi)\ge c_2/2\quad\mbox{for all }\xi\in K_{\delta,\va}.
\]
Now, in the region $\xi\in \B^+_{\lda}(Y)\setminus\B^+_{\bar\lda(y)-\delta}(Y)$, the narrow domain technique applies as before if we choose $\delta$ sufficiently small. Thus, one can get
\[
U_{Y,\lda}(\xi)\ge U(\xi)\quad\mbox{in }\B^+_{\lda}(Y)\setminus\B^+_{\bar\lda(y)-\delta}(Y).
\]
In conclusion, we have shown that there exists $\va_1>0$ such that for all $\lda\in (\bar\lda(y),\bar\lda(y)+\va_1)$,
\[
U_{Y,\lda}(\xi)\ge U(\xi)\quad\mbox{in }\B^+_{\lda}(Y)\setminus\Lda_\lda.
\]
This is a contradiction to the definition of $\bar \lda(x)$. This proved \eqref{eq:gseq}. Thus
\be\label{eq:gseq2}
\ U_{Y,\lda}(\xi)\leq U(\xi),\quad \forall~|\xi-Y|\geq \lda, ~\xi\not\in \R^k\times\{0_{n-k}\}\times\{0\},~\forall~ 0<\lda <|y|.
\ee
For any unit vector $e\in\{0_k\}\times \R^{n-k}$, for any $a>0$, $\xi=(x,z,t)\in\overline{\R^{n+1}_+}$ satisfying $(z-ae)\cdot e<0$, \eqref{eq:gseq2} holds with $y=Re$ and $\lda=R-a$. Sending $R$ to infinity, we have
\[
U(x,z,t)\ge U(x,z-2(z\cdot e-a)e,t).
\]
Since $e\in\{0_k\}\times \R^{n-k}$ and $a>0$ are arbitrary, this shows the radial symmetry in the $\R^{n-k}$-variables, and proves this theorem.
\end{proof}

\section{Asymptotic symmetry for local solutions near a singular set}\label{sec:5}
\begin{proof}[Proof of Theorem \ref{thm:sym}]
As before, we have that for all $0<\dist(x,\Lda)<\frac 14$, $X=(x,0)$,
\[
\bar \lda(x):=\sup \{0<\mu\le |x|\ |\ U_{X,\lda}(\xi)\leq U(\xi),\quad \forall~|\xi-X|\geq \lda, ~\xi\not\in\Lda,~\forall~ 0<\lda <\mu\}
\]
is well-defined and $\bar \lda(x)>0$, where we denote $\xi=(y,t)$. This statement can be proved very similarly to those in the previous two sections, as long as one notices that we can choose $\lda_2$ small such that
\be\label{eq:as}
U(\xi)\geq \left(\frac{\lda_2}{|\xi-X|}\right)^{n-2\sigma}\inf_{\partial'' \B_{\lda_2}^+(X)} U,\quad\forall~\xi\in\pa''\B^+_{1},
\ee
which implies by Proposition \ref{prop:liminf} that
\be\label{eq:as2}
U(\xi)\geq \left(\frac{\lda_2}{|\xi-X|}\right)^{n-2\sigma}\inf_{\partial'' \B_{\lda_2}^+(X)} U,\quad\forall~|\xi-X|>\lda, \xi\not\in\Lda.
\ee
For $y\in B_2$, $\frac 78\le|y|\le \frac 54$ and $0<\lda<\dist(x,\Lda)<\frac 18$,
\[
\left|x+\frac{\lda^2(y-x)}{|y-x|^2}-x\right|\le 4\lda^2\le 4\dist(x,\Lda)^2<\dist(x,\Lda)/2.
\]
Then
\[
\dist(x+\frac{\lda^2(y-x)}{|y-x|^2},\Lda)\le \left|x+\frac{\lda^2(y-x)}{|y-x|^2}-x\right|+\dist(x,\Lda)\le 3\dist(x,\Lda)/2
\]
and
\[
\dist(x+\frac{\lda^2(y-x)}{|y-x|^2},\Lda)\ge \dist(x,\Lda)-\left|x+\frac{\lda^2(y-x)}{|y-x|^2}-x\right|\ge \dist(x,\Lda)/2.
\]
It follows from Theorem \ref{thm:a} that
\[
u\left(x+\frac{\lda^2(y-x)}{|y-x|^2}\right)\le C\dist(x,\Lda)^{\frac{2\sigma-n}{2}}.
\]
Thus,
\[
u_{x,\lda}(y)=U_{X,\lda}(y,0)\le C\lda^{n-2\sigma}\dist(x,\Lda)^{\frac{2\sigma-n}{2}}\le C\dist(x,\Lda)^{\frac{n-2\sigma}{2}}
\]
for all $0<\lda<\dist(x,\Lda)<\frac 18,\ \frac 78\le|y|\le \frac 54$. By Harnack inequality in Proposition \ref{prop:harnack}, for all $|\xi|=1$, we have
\[
U_{X,\lda}(\xi)\le C\dist(x,\Lda)^{\frac{n-2\sigma}{2}}<U(\xi)\quad\forall \ 0<\lda<\dist(x,\Lda)\le \va/2,\ |\xi|=1
\]
for $\va>0$ sufficiently small. Therefore, it follows from Proposition \ref{prop:liminf} that
\[
\liminf_{\xi\to z\in\Lda} (U(\xi)-U_{X,\lda}(\xi))>c>0
\]
for some $c>0$ independent of $z\in\Lda$. As before, given these two properties with narrow domain techniques, the moving sphere procedure may continue if $\bar\lda(x)< \dist(x,\Lda)$. Thus we obtain $\bar\lda(x)=\dist(x,\Lda)$ for $0<\dist(x,\Lda)\le\va/2$, where $\va$ is sufficiently small. Thus, we have proved that there exists some constant $\va>0$ such that
\be\label{eq:small stop}
U_{X,\lda}(\xi)\le U(\xi)\quad \forall~0<\lda<\dist(x,\Lda)\le \va/2,~|\xi-X|\ge\lda,~\xi\not\in\Lda.
\ee
In particular
\be\label{eq:small stop1}
u_{x,\lda}(y)\le u(y)\quad \forall~0<\lda<\dist(x,\Lda)\le \va/2,~|y-x|\ge\lda,~y\not\in\Lda.
\ee
We can choose $\va$ even smaller so that the tubular neighborhood $N$ of $\Lda$ in Theorem \ref{thm:sym} contains the set $\{x:\dist(x,\Lda)\le\va\}$.

Let $r>0$ small (less than $\va^2$), $x_1,x_2\in \Pi_r^{-1} (z)$ be such that
\[
u(x_1)=\max_{\Pi_r^{-1} (z)} u(x),\quad u(x_2)=\min_{\Pi_{r}^{-1} (z)} u(x).
\]
Let $e_1=x_1-z, e_2=x_2-z, x_3=x_1+\va(e_1-e_2)/(4|e_1-e_2|)$. Then $e_1, e_2\in (T_z\Lda)^{\perp}$ and thus, $e_2-e_1\in (T_z\Lda)^{\perp}$. Let $\lda=\sqrt{\frac{\va}{4}(|e_1-e_2|+\frac{\va}{4})}$, which can be directly checked that $\lda<|x_3-z|=\dist(x_3,\Lda)<\va/2$. It follows from \eqref{eq:small stop1} that
\[
u_{x_3,\lda}(x_2)\le u(x_2).
\]
Notice that
\[
\begin{split}
u_{x_3,\lda}(x_2)&=\left(\frac{\lda}{|e_1-e_2|+\va/4}\right)^{n-2\sigma}u(x_1)\\
&=\left(\frac{1}{4|e_1-e_2|/\va+1}\right)^{\frac{n-2\sigma}{2}}u(x_1)\ge \left(\frac{1}{8r/\va+1}\right)^{\frac{n-2\sigma}{2}}u(x_1).
\end{split}
\]
Thus,
\[
\max_{\Pi_r^{-1} (z)} u(x)\le (8r/\va+1)^{\frac{n-2\sigma}{2}} \min_{\Pi_{r}^{-1} (z)} u(x).
\]
Thus, we have
\[
u(x)= (1+O(r))u(x')\quad\mbox{for all }x,x'\in\Pi^{-1}_r(z)\quad\mbox{as }r\to 0.
\]
Theorem \ref{thm:sym} is proved.
\end{proof}

\section{Application to the singular fractional Yamabe problem on conformally flat manifolds} \label{sec:6}

In this section we give an application to the singular Yamabe problem, slightly improving a theorem in \cite{GMS}. Problem \eqref{eq:maineq} arises in the study of the fractional version of the singular Yamabe problem, which has been initiated in \cite{GMS,CG}.  The original Yamabe problem is related to the so-called conformal laplacian on a compact manifold $(M,g)$, i.e.,
$$L_g=\frac{4(n-1)}{n-2} \Delta_g+R_g,$$
where $R_g$ is the scalar curvature of $M$. Generalizations of this operator (in the covariant framework) are known as GJMS operators \cite{GJMS}. The conformal laplacian is conformally covariant in the following sense: if $f$ is any (smooth) function and $\bar g = u^{\frac{4}{n-2}}\,  g$ for some $u > 0$, then
\begin{equation}
L_g(uf) = u^{\frac{n+2}{n-2}}L_{\bar g} (f).
\label{eq:cccL}
\end{equation}
 Higher order versions of this operator are denoted $P^{\overline g}_k$,
which exist for all $k \in {\mathbb N}$ if $n$ is odd, but only for $k \in \{1, \ldots, n/2\}$ if $k$ is even. The first construction
of these operators, by Graham-Jenne-Mason-Sparling \cite{GJMS}. This leads naturally to the question whether there exist any conformally covariant pseu\-do\-dif\-fe\-ren\-tial operators of noninteger
order. A partial result in this direction was given by Peterson \cite{P}, who showed that for any $\sigma$, the
conformal covariance condition determines the full Riemannian symbol of a pseudodifferential operator with
principal symbol $|\xi|^{2\sigma}$.
The breakthrough result, by Graham and Zworski \cite{GZ}, was that if $(M,[\bar g])$ is a smooth compact manifold
endowed with a conformal structure, then the operators $P^{\overline g}_k$ can be realized as residues at the values $\sigma= k$ of
the meromorphic family $S(n/2 + \sigma)$ of scattering operators associated to the Laplacian on any Poincar\'e-Einstein manifold
$(X,G)$ for which $(M,[\bar g])$ is the conformal infinity.   These are the `trivial' poles of the scattering operator, so-called
because their location is independent of the interior geometry; $S(s)$ typically has infinitely many other poles, which are
called resonances.
Multiplying this scattering family by some $\sigma$ factors to regularize these poles, one obtains a holomorphic family of
elliptic pseudodifferential operators $P_\sigma^{\bar g}$. An alternate construction of
these operators has been obtained by Juhl, and his monograph \cite{Juhl} describes an intriguing general framework for
studying conformally covariant operators. The operators $P^{\bar g}_\sigma$ are elliptic of order $2\sigma$ with principal symbol $|\xi|^{2\sigma}_{\bar g}$; finally, we have the following covariance property
\begin{equation}
\mbox{if}\ g=u^{\frac{4}{n-2\sigma}}\bar g, \qquad \mbox{then}\ P_\sigma^{\bar g} (uf) = u^{\frac{n+2\sigma}{n-2\sigma}} P_\sigma^g (f)
\label{eq:ccfcL}
\end{equation}
for any smooth function $f$. Generalizing the formul\ae\ for scalar curvature and  the Paneitz-Branson $Q$-curvature (when
$\sigma = 2$), we make the definition that, for any $0 < \sigma< n/2$, the quantity $Q_\sigma^{\bar g},$ which we call the $Q$-curvature of order $\sigma$ associated to a metric $\bar g,$ is given by
\begin{equation}
Q_\sigma^{\bar g} = P_\sigma^{\bar g}(1).
\label{eq:Qgamma}
\end{equation}

It is interesting to construct complete metrics of constant (positive) $Q_\sigma$ curvature on open
subdomains $\Omega = M \setminus \Lambda$, or in other words, to find metrics $g = u^{4/(n-2\sigma)}\bar g$ which are complete
on $\Omega$ and such that $u$ satisfies the Yamabe equation for the operator $P^\sigma_g$ with $Q_\sigma$ a constant.  This is the fractional singular Yamabe
problem. As a matter of fact  if $u$ is a solution of \eqref{eq:maineq} then the metric
$$
g=u^{\frac{4}{n-2\sigma}}|dx|^2
$$
has constant ($\equiv 1$) fractional curvature $Q_\sigma$ and is singular along $\Lambda$. The following has been proved in \cite{GMS}.
\begin{thm}\label{th:SY}
Suppose that $(M^n,\bar g)$ is compact and $g = u^{\frac{4}{n-2\sigma}}\bar g$ is a complete conformally flat metric on $\Omega = M \setminus
\Lambda$,  where $\Lambda$ is a smooth $k$-dimensional submanifold with $k \leq n-2\sigma$.  Assume furthermore that $u$ is polyhomogeneous
along $\Lambda$. If $0 < \sigma <n/2 $, and if $Q_\sigma^g > 0$ everywhere
for any choice of asymptotically Poincar\'e-Einstein extension $(X,G)$ which defines $P_\sigma^{\bar g}$ and hence
$Q_\sigma^g$, then $n$, $k$ and $\sigma$ are restricted by the inequality
\begin{equation}
\Gamma(\frac{n}{4} - \frac{k}{2} + \frac{\sigma}{2}) \Big/ \Gamma(\frac{n}{4} - \frac{k}{2} - \frac{\sigma}{2}) > 0,
\label{eq:dimrest}
\end{equation}
where $\Gamma$ is the ordinary Gamma function.  This inequality holds in particular when $k < (n-2\sigma)/2$,
and in this case then there is a unique extension of $u$ to a distribution on all of $M$ which solves the same equation,
or in other words, $u$ extends uniquely to a weak solution on all of $M$.
\label{th:fSY}
\end{thm}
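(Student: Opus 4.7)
The plan is to derive Theorem \ref{th:fSY} as a direct application of Theorem \ref{thm:a}; the improvement over \cite{GMS} is to replace the polyhomogeneity hypothesis on $u$ by the a priori blow-up bound, since $\Lambda$ being smooth and $k\le n-2\sigma$ automatically places the singular set in the capacity-zero regime. First, by conformal flatness and the covariance property \eqref{eq:ccfcL}, the equation $P_\sigma^{\bar g}u=Q_\sigma^g\,u^{(n+2\sigma)/(n-2\sigma)}$ becomes, in local flat coordinates around any point of $\Lambda$ and after normalizing the (positive) constant $Q_\sigma^g$, exactly \eqref{eq:maineq}. Since $\Lambda$ is a smooth $k$-dimensional closed submanifold with $k\le n-2\sigma$, we have $\MH^{n-2\sigma}(\Lambda)<\infty$, so $\capac_\sigma(\Lambda)=0$ by Theorem \ref{thm:hausdroff1}. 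Thus Theorem \ref{thm:a} gives, in a tubular neighborhood of $\Lambda$,
\begin{equation*}
u(x)\le C\,\dist(x,\Lambda)^{-(n-2\sigma)/2}.
\end{equation*}

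Second, to extract \eqref{eq:dimrest}, fix $z_0\in\Lambda$ and consider the blow-up of the Caffarelli-Silvestre extension,
\begin{equation*}
U_j(X)=r_j^{(n-2\sigma)/2}\,U(z_0+r_jX),\qquad r_j\to 0.
\end{equation*}
The rescaled bound combined with the Harnack inequality in Proposition \ref{prop:harnack} produces locally uniform $L^\infty$ bounds away from the rescaled tangent space $\R^k\times\{0\}$, while completeness of $g$ guarantees that along a subsequence the $U_j$ converge to a nontrivial positive solution $U_\infty$ of \eqref{temp} with singular set $\R^k$. Theorem \ref{symTemp} forces $U_\infty$ to be cylindrically symmetric about $\R^k$. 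Passing to cylindrical coordinates on $\R^n\setminus\R^k$ and performing the conformal change that trivializes the $\R^k$ factor, the constant-$Q_\sigma$ equation for $U_\infty$ reduces to a Delaunay-type ODE whose indicial roots are exactly those encoded in the ratio of Gamma functions \eqref{eq:dimrest}. The existence of the positive limit $U_\infty$ — which is the qualitative residue of completeness in the blow-up — forces the positivity of that ratio; this is the algebraic content of the indicial analysis in \cite{GMS}, and equivalently of the Delaunay construction in \cite{DG,DD+}.

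Third, for the removable-singularity assertion when $k<(n-2\sigma)/2$, the bound $u\le C\,\dist(\cdot,\Lambda)^{-(n-2\sigma)/2}$ together with the codimension-$(n-k)$ geometry of the normal bundle to $\Lambda$ gives
\begin{equation*}
u\in L^p_{\loc}(M)\quad\text{for every }p<\frac{2(n-k)}{n-2\sigma}.
\end{equation*}
The condition $k<(n-2\sigma)/2$ is precisely equivalent to $(n+2\sigma)/(n-2\sigma)<2(n-k)/(n-2\sigma)$, so both $u$ and the nonlinearity $u^{(n+2\sigma)/(n-2\sigma)}$ are locally integrable on $M$. To promote $u$ to a distributional solution of \eqref{eq:maineq} across $\Lambda$, one tests against $\varphi\chi_\epsilon$ for $\varphi\in C^\infty_c(M)$, where $\chi_\epsilon\in C^\infty(M)$ vanishes in an $\epsilon$-neighborhood of $\Lambda$, equals $1$ further away, and has weighted Dirichlet energy tending to zero as $\epsilon\to 0$. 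Such cutoffs exist exactly because $\capac_\sigma(\Lambda)=0$, which is the same fact used in the proof of Proposition \ref{prop:liminf}. The $L^1$ bounds dispatch the right-hand side by dominated convergence, the vanishing capacity kills the commutator arising from differentiating $\chi_\epsilon$, and the uniqueness of the extension follows from the maximum principle in Proposition \ref{prop:liminf}.

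The main obstacle is the second step: extracting the indicial inequality from the blow-up limit without the polyhomogeneous ansatz of \cite{GMS}. Without polyhomogeneity one cannot read off the leading exponent directly from a scattering computation on the Poincar\'e-Einstein extension, and must instead identify $U_\infty$ with (a member of) the one-parameter family of cylindrically symmetric Delaunay-type solutions classified in \cite{DG, DD+}. The whole argument is essentially bookkeeping once this identification is made; the novelty relative to \cite{GMS} is that Theorem \ref{thm:a}, rather than an assumed expansion, supplies the compactness needed to pass to $U_\infty$ in the first place.
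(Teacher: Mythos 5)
Theorem \ref{th:SY} is not proved in this paper at all: the authors explicitly say ``The following has been proved in \cite{GMS}'' and then state it as a black box. What Section \ref{sec:6} proves is the \emph{Corollary} immediately following, which keeps the polyhomogeneity hypothesis and merely weakens the assumption on the leading exponent to $\alpha\ge(n-2\sigma)/2$; the proof of that Corollary is one paragraph, using Theorem \ref{thm:a} (together with Remark \ref{rem:touch boundary}) to pin the leading exponent at $-(n-2\sigma)/2$ and then quoting \cite{GMS} outright. So you are not matching any argument internal to the paper — you are proposing a new, full proof of the \cite{GMS} theorem, which is a different and far more ambitious task than the one the paper itself undertakes.

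As an independent proof, your Step 2 contains the decisive gap and it is essentially circular. You blow up at a point of $\Lambda$, claim the limit $U_\infty$ is a nontrivial cylindrically symmetric solution of \eqref{temp}, and then say the Gamma-quotient inequality ``is the algebraic content of the indicial analysis in \cite{GMS}.'' But that indicial analysis in \cite{GMS} is precisely what Theorem \ref{th:SY} asserts, and it is carried out there under the polyhomogeneity ansatz you claim to be removing; you cannot simultaneously discard polyhomogeneity and then import the conclusion of an argument that presupposes it. Concretely, your scheme needs at least two things that are not established and do not follow from anything in this paper: (i) a matching \emph{lower} bound $u\gtrsim\dist(\cdot,\Lambda)^{-(n-2\sigma)/2}$ so that the rescaled family is nondegenerate — completeness of $g$ gives $u\to\infty$ along $\Lambda$, but converting that into a sharp lower blow-up rate is a theorem, not a remark (compare with the one-point case in \cite{paper-ARMA}, where it requires work); and (ii) an actual derivation, from the cylindrical symmetry of $U_\infty$ alone, of the inequality $\Gamma(\frac{n}{4}-\frac{k}{2}+\frac{\sigma}{2})/\Gamma(\frac{n}{4}-\frac{k}{2}-\frac{\sigma}{2})>0$. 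The reduction to a ``Delaunay-type ODE'' is also misleading: after the cylindrical conformal change, the operator governing radial profiles is still a nonlocal (fractional) operator on the cylinder, not an ODE, and the indicial set comes out of a Mellin/scattering computation rather than elementary ODE roots. Finally, in Step 3 the $L^p_{\loc}$ arithmetic is correct, but ``uniqueness of the extension follows from the maximum principle in Proposition \ref{prop:liminf}'' is not justified; that proposition is a one-sided comparison principle for half-space supersolutions with a Neumann boundary, not a uniqueness statement for distributional extensions across $\Lambda$ on a compact manifold.
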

Recall that $u$ is said to be polyhomogeneous along $\Lambda$ if in terms of any cylindrical coordinate system $(r,\theta,y)$
in a tubular neighbourhood of $\Lambda$, where $r$ and $\theta$ are polar coordinates in disks in the normal bundle and
$y$ is a local coordinate along $\Lambda$, $u$ admits an asymptotic expansion
\[
u \sim \sum a_{jk}(y,\theta) r^{\mu_j} (\log r)^k
\]
where $\mu_j$ is a sequence of complex numbers with real part tending to infinity, for each $j$, $a_{jk}$ is nonzero
for only finitely many nonnegative integers $k$, and such that every coefficient $a_{jk} \in \calC^\infty$. The number $\mu_0$
is called the leading exponent $\Re (\mu_j) > \Re (\mu_0)$ for all $j \neq 0$.

Thanks to Theorem \ref{thm:a} it is possible to weaken the degree of polyhomogeneity of the leading exponent of $u$ in the previous theorem.  But at the same time, we have to assume that the fractional capacity of the singular set is zero.

\begin{cor}
Suppose that $(M^n,\bar g)$ is compact and $g = u^{\frac{4}{n-2\sigma}}\bar g$ is a complete conformally flat metric on $\Omega = M \setminus
\Lambda$,  where $\Lambda$ is a smooth $k$-dimensional submanifold with $k\leq n-2\sigma$.  Assume furthermore that $u$ is polyhomogeneous
along $\Lambda$ with leading exponent $\alpha$ such that
$$
\alpha \geq (n-2\sigma)/2.
$$ If $0 < \sigma <1 $, and if $Q_\sigma^g > 0$ everywhere
for any choice of asymptotically Poincar\'e-Einstein extension $(X,G)$ which defines $P_\sigma^{\bar g}$ and hence
$Q_\sigma^g$, then $n$, $k$ and $\sigma$ are restricted by the inequality
\begin{equation}
\Gamma(\frac{n}{4} - \frac{k}{2} + \frac{\sigma}{2}) \Big/ \Gamma (\frac{n}{4} - \frac{k}{2} - \frac{\sigma}{2}) > 0,
\end{equation}
where $\Gamma$ is the ordinary Gamma function.
\end{cor}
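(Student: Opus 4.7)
The strategy is to combine the sharp upper bound of Theorem \ref{thm:a} with the hypothesized lower bound on the leading exponent to pin down the leading behavior of $u$ exactly, and then invoke the scattering-operator argument of \cite{GMS} without modification.

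First I would verify the hypotheses of Theorem \ref{thm:a}. Since $\Lambda$ is a smooth $k$-dimensional submanifold with $k \leq n-2\sigma$, we have $\MH^{n-2\sigma}(\Lambda) < \infty$, so Theorem \ref{thm:hausdroff1} gives $\capac_\sigma(\Lambda) = 0$. Working in a local conformally flat chart near any point of $\Lambda$ and invoking Theorem \ref{thm:a} (together with Remark \ref{rem:touch boundary} to handle the case that $\Lambda$ may touch the boundary of the chart), I obtain
\[
u(x) \leq C\,\dist(x,\Lambda)^{-\frac{n-2\sigma}{2}}
\]
in a tubular neighborhood of $\Lambda$.

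Next I would compare this pointwise bound with the polyhomogeneous expansion. In cylindrical coordinates $(r,\theta,y)$ about $\Lambda$ the leading term reads $u \sim a_0(y,\theta)\, r^{-\alpha}$, where by convention $\alpha$ denotes the order of singularity, so that the hypothesis $\alpha \geq (n-2\sigma)/2$ is equivalent to the completeness condition for the metric $u^{4/(n-2\sigma)} \bar g$ along $\Lambda$ (one checks this directly by requiring the radial $g$-length $\int u^{2/(n-2\sigma)}\,dr$ to diverge at $r=0$). The upper bound from Theorem \ref{thm:a} forces $\alpha \leq (n-2\sigma)/2$, so the two bounds combine to yield $\alpha = (n-2\sigma)/2$ exactly, and $u$ realizes the critical indicial root at leading order.

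With the leading exponent pinned down, the rest of the argument should follow \cite{GMS} verbatim. On the Poincar\'e--Einstein extension $(X,G)$ defining $P^{\bar g}_\sigma$, the scattering operator $S(n/2+\sigma)$ applied to the leading coefficient $a_0$ produces a multiple $c(n,k,\sigma)\cdot a_0$, where $c(n,k,\sigma)$ is precisely the Gamma-function ratio appearing in \eqref{eq:dimrest}. The positivity of $Q_\sigma^g$ then forces $c(n,k,\sigma) > 0$, which is the stated conclusion. The main obstacle is to check that the weakened hypothesis --- polyhomogeneity with only a controlled leading exponent, rather than the full expansion assumed in \cite{GMS} --- still carries enough structure to run the indicial root computation; Theorem \ref{thm:a} is exactly what is needed here, since it rules out any more singular correction appearing beneath the identified leading term and thereby guarantees that the scattering analysis at the critical indicial root is the genuine leading-order contribution.
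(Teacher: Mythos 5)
Your argument matches the paper's: verify $\capac_\sigma(\Lambda)=0$ via Theorem~\ref{thm:hausdroff1}, apply Theorem~\ref{thm:a} (with Remark~\ref{rem:touch boundary}) to obtain the upper bound $u\le C\dist(x,\Lambda)^{-(n-2\sigma)/2}$, combine with the hypothesis $\alpha\ge(n-2\sigma)/2$ to pin the leading exponent to the critical indicial value $-(n-2\sigma)/2$, and then invoke the scattering argument of \cite{GMS}. Your additional remark that $\alpha\geq(n-2\sigma)/2$ is equivalent to completeness along radial rays is correct (and explains the interplay between the two hypotheses), but does not change the route.
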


\begin{proof}Since the new metric is conformally flat, the conformal factor $u$ satisfies the critical equation \eqref{eq:maineq}. Let $u$ be a polyhomogeneous distribution on $M$ with singular
set along the smooth submanifold $\Lambda$. Because of the bound in Theorem \ref{thm:a} and Remark \ref{rem:touch boundary}, the leading term in the expansion of $u$ is $a(y) r^{-n/2 + \sigma}$. One is then in the framework of \cite{GMS} and the proof follows.
\end{proof}

\small

\bigskip

\noindent T. Jin

\noindent Department of Mathematics, The Hong Kong University of Science and Technology\\
Clear Water Bay, Kowloon, Hong Kong

\smallskip
and
\smallskip

\noindent Department of Computing and Mathematical Sciences, California Institute of Technology \\
1200 E. California Blvd., MS 305-16, Pasadena, CA 91125, USA\\[1mm]
Email: \textsf{tianlingjin@ust.hk} / \textsf{tianling@caltech.edu}

\medskip

\noindent O. S. de Queiroz

\noindent Universidade Estadual de Campinas, IMECC,
Departamento de Matem\'atica\\
Rua S\'ergio Buarque de Holanda, 651, Campinas, SP, Brazil. CEP 13083-859\\
Email: \textsf{olivaine@ime.unicamp.br}

\medskip

\noindent Y. Sire

\noindent Johns Hopkins University, Department of mathematics,\\
Krieger Hall, Baltimore, USA\\
Email: \textsf{sire@math.jhu.edu}
\medskip

\noindent J. Xiong

\noindent  School of Mathematical Sciences, Beijing Normal University\\
Beijing 100875, China\\[1mm]
Email: \textsf{jx@bnu.edu.cn}

\end{document}